\documentclass[a4paper,9pt]{article}
\usepackage[plainpages=false]{hyperref}
\usepackage{amsfonts,latexsym,rawfonts,amsmath,amssymb,amsthm,mathrsfs}
\usepackage{amsmath,amssymb,amsfonts,latexsym,lscape,rawfonts,natbib}

\usepackage[all]{xy}
\usepackage{eufrak}
\usepackage{makeidx}         
\usepackage{graphicx,psfrag}

\newcommand{\D}[2]{\ensuremath{ \frac{\partial{#1}}{\partial{#2}}}}
\newcommand{\R}{\ensuremath{\mathbb{R}}}

\DeclareMathOperator{\supp}{supp}
\DeclareMathOperator{\Area}{Area}
\DeclareMathOperator{\Vol}{Vol}
\DeclareMathOperator{\diam}{diam}

\DeclareMathOperator{\Lap}{\triangle}

\DeclareMathOperator{\diag}{diag}
\DeclareMathOperator{\Exp}{exp}

\title{On the Conditions to Extend Ricci Flow}
\author{Bing Wang}
\date{}
\begin{document}

\maketitle

\newtheorem{definition}{Definition}[section]
\newtheorem{result}{Result}[section]
\newtheorem{lemma}{Lemma}[section]
\newtheorem{theorem}{Theorem}[section]
\newtheorem{corollary}{Corollary}[section]
\newtheorem{remark}{Remark}[section]
\newtheorem{proposition}{Proposition}[section]
\newtheorem{property}{Property}[section]
\newtheorem{claim}{Claim}[section]
\newtheorem{conjecture}{Conjecture}[section]
\newtheorem{question}{Question}[section]
\newtheorem{example}{Example}[section]

\newcommand{\bo}[1]{\ensuremath{\bar{\omega}(#1)}}
\newcommand{\norm}[2]{{ \ensuremath{\|} #1 \ensuremath{\|}}_{#2}}

\newcommand{\lafa}{\ensuremath{\lambda_\alpha}}
\newcommand{\xa}{\ensuremath{x_\alpha}}
\newcommand{\ya}{\ensuremath{y_\alpha}}
\newcommand{\ga}{\ensuremath{g_\alpha}}
\newcommand{\Ra}{\ensuremath{R_\alpha}}
\newcommand{\fa}{\ensuremath{f_\alpha}}

\begin{abstract}
   Consider $\{ (M^n,g(t)), \; 0 \leq t < T<\infty\}$ as an
   unnormalized Ricci flow solution: $\frac{dg_{ij}}{dt} = -2R_{ij}$
   for $t \in [0,T)$. Richard Hamilton shows that if the curvature
   operator is uniformly bounded under the flow for all $t \in
   [0,T)$ then the solution can be extended over $T$. Natasa Sesum
   proves that a uniform bound of Ricci tensor is enough to extend
   the flow.  We show that if Ricci is bounded from below, then a
   scalar curvature integral bound is enough to extend flow, and
   this integral bound condition is optimal in some sense.
\end{abstract}

\section{ When can Ricci flow be extended?}
In~\cite{Ha1}, R. Hamilton introduces  Ricci flow which deforms
Riemannian metrics in the direction of the Ricci tensor. One hopes
that the Ricci flow will deform any Riemannian metric to some
canonical metrics, such as Einstein metrics. One can even understand
geometric and topological structure of the underlying differential
manifold by this sort of deformation. The idea is best illustrated
in~\cite{Ha1} where Hamilton proves that in any simply connected 3
manifold without boundary, any Riemannian metric with positive Ricci
curvature can be deformed into a positive space form (up to
scaling). Consequently, R. Hamilton proves that the underlying
manifold is indeed diffeomorphic to $S^3.\;$  This fundamental work
sparks a great interest of many mathematicians in Ricci flow. In a
series of work, R. Hamilton introduces an ambitious program to prove
the Poincar\`e conjecture via Ricci flow (cf.~\cite{Ha3} for
Hamilton's program and early references in Ricci flow.). The
celebrated work of G. Perelman~\cite{Pe1},~\cite{Pe2} and~\cite{Pe3}
indeed proves the Poincar\`e conjecture which states that every
simply connected 3 manifold is $S^3.\;$ We refer the readers
to~\cite{KL},~\cite{MT}
for more information. \\

After Perelman's work in the Ricci flow,  there is a renewed
interest in Ricci flow and its application around the world.  We
will refer readers to the book~\cite{CLN} for more updated
references.  In this note, we want to concentrate in studying some
basic issue on Ricci flow: the maximal existence time of Ricci flow
and  the geometric conditions that might affect the maximal
existence
time.\\

One notes that Ricci flow is a weak Parabolic flow.  R. Hamilton
first proves that for any smooth initial data, the flow will exist
for a short time in~\cite{Ha1}. In~\cite{De},  Hamilton's proof is
simplified greatly by a clever choice of gauge. The next immediate
question is the so called ``maximal existence time" for the Ricci
flow (with respect to initial metric). In~\cite{Ha3}, Hamilton
proves that if $T<\infty$ is the maximal existence time of a closed
Ricci flow solution $\{ (M^n,g(t)), 0 \leq t <T < \infty \}$, then
Riemannian curvature is unbounded as $t \to T$. In other words, a
uniform bound for Riemannian curvature on $M \times [0,T)$ is enough
to extend Ricci flow over time $T$. In~\cite{Se}, by a blowup
argument, Sesum shows that Ricci curvature uniformly bounded  on $M
\times [0,T)$ is enough to extend Ricci flow over $T$. Sesum's
surprising work uses the no local collapsing theorem of Perelman. A
natural question arises: what is the optimal condition for the Ricci
flow to be extended?  In many ways, we believe that the scalar
curvature bound shall be enough to extend the flow.    In this note,
we first prove ( See Definition~\ref{definition: 1} for notations),

\begin{theorem}
$\{ (M^n,g(t)), 0 \leq t <T < \infty \}$ is a closed Ricci flow
solution.
 If
 \begin{enumerate}
 \item[1.]    $Ric(x,t) \geq -A$  for all $(x,t) \in M \times
 [0,T)$, $A$ is a positive constant ,
 \item[2.]    $\norm{R}{\alpha,\; M \times [0,T)} < \infty, \;
 \alpha \geq \frac{n+2}{2}$,
 \end{enumerate}
 then this flow can be extended over time $T$.
\label{theorem: 1}
\end{theorem}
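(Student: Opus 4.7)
The plan is to reduce the theorem to Sesum's extension criterion by upgrading the integral bound on $R$ to a pointwise bound. Once $\|R\|_\infty<\infty$, the lower Ricci bound gives a two-sided pointwise Ricci bound (every eigenvalue of $\mathrm{Ric}$ lies in $[-A, R+(n-1)A]$), which by Sesum's theorem suffices to continue the flow past $T$.

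Before running any iteration, I would first extract the analytic preliminaries from the two assumptions together with $T<\infty$: (i) uniform volume bounds, since $\frac{d}{dt}\mathrm{Vol}=-\int R\,d\mu_t$ and $R\geq -nA$; (ii) a uniform Sobolev inequality of the form $\bigl(\int u^{2n/(n-2)}\bigr)^{(n-2)/n}\leq C_S\int(|\nabla u|^2+C_0 u^2)$ valid on $(M,g(t))$ for every $t\in[0,T)$, which follows from Perelman's monotonicity of $\mu(g,\tau)$ (or equivalently $\kappa$-noncollapsing combined with the volume bound); (iii) the pointwise algebraic inequality $|\mathrm{Ric}|^2\leq C_1R^2+C_2$, a direct consequence of $\mathrm{Ric}\geq -Ag$.

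With these in hand, I would run parabolic Moser iteration on the evolution
\[
\partial_t R=\Lap R+2|\mathrm{Ric}|^2\leq \Lap R+C(R^2+1).
\]
Setting $u:=R+K$ with $K$ chosen so $u\geq 1$, multiplying by $u^{p-1}$, and integrating while keeping track of the $\partial_t d\mu_t=-R\,d\mu_t$ term, I expect an inequality of the shape
\[
\frac{d}{dt}\int_M u^p\,d\mu_t+c(p)\int_M |\nabla u^{p/2}|^2\,d\mu_t\leq C\,p\int_M u^{p+1}\,d\mu_t.
\]
The Sobolev inequality from step (ii) converts the gradient term into an $L^{p\cdot n/(n-2)}$ term, and the standard parabolic Moser scheme then yields
\[
\sup_{M\times[\tau,T)}u\leq C(\tau)\,\|u\|_{L^\alpha(M\times[0,T))}
\]
as long as $\alpha$ exceeds the parabolic critical exponent $(n+2)/2$. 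The $L^\alpha$ bound on $R$ from the hypothesis transfers to $u$ because volume is uniformly bounded, closing the loop.

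The main obstacle is the borderline case $\alpha=\tfrac{n+2}{2}$, precisely the parabolic scaling-critical exponent at which the nonlinear term $R^2$ has the same weight as the linear diffusion. The way I would handle it is to exploit absolute continuity: since $\int_{M\times[0,T)}R^{(n+2)/2}\,d\mu\,dt$ is finite, the tail $\int_{M\times[\tau,T)}R^{(n+2)/2}\,d\mu\,dt$ tends to $0$ as $\tau\to T$, so the top-order term in the Moser step can be absorbed into the Sobolev term once $\tau$ is sufficiently close to $T$. Equivalently, a De Giorgi truncation with level sets $u\geq k$ can be set up so that the measure of $\{u\geq k\}$ is small in the parabolic sense; this kills the borderline factor and yields the same sup bound. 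The secondary technical point is to check that all constants produced by the moving-metric computations (from $\partial_t d\mu_t$ and from commuting $\Lap$ with time) remain uniformly bounded in $t$, which follows from the volume and Sobolev controls established at the outset.
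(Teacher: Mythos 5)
Your overall reduction (pointwise bound on $R$ plus $Ric\ge -A$ gives a two-sided Ricci bound, then Sesum) is fine, and your instinct to use smallness of the critical norm to handle $\alpha=\frac{n+2}{2}$ is the right one. The genuine gap is your preliminary (ii), on which the whole iteration rests. A time-uniform Sobolev inequality of the form $\bigl(\int u^{2n/(n-2)}\bigr)^{(n-2)/n}\le C_S\int(|\nabla u|^2+C_0u^2)$ does \emph{not} follow from $\kappa$-noncollapsing plus a volume bound: Perelman's noncollapsing only applies on balls where $|R|\le r^{-2}$ (resp.\ $|Rm|\le r^{-2}$), so before any rescaling there is no scale at which you may invoke it — controlling $R$ is exactly what you are trying to prove. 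Moreover, testing your inequality with $u\equiv 1$ shows it forces a uniform lower volume bound, which is not known under the hypotheses. What does follow from the monotonicity of $\mu$ is an inequality with an extra curvature term, $\bigl(\int u^{2n/(n-2)}\bigr)^{(n-2)/n}\le A\int\bigl(|\nabla u|^2+\tfrac{R}{4}u^2\bigr)+B\int u^2$; the $\int R_+u^2$ piece involves precisely the unbounded quantity, so dropping it is circular, and keeping it means the critical absorption must be carried out for it as well — none of which your sketch addresses. This is exactly why the paper does not work globally: it blows up at a maximum of $R$, so that after rescaling $R\le 1$ on a unit ball, noncollapsing applies there, and Croke's isoperimetric argument then yields a local, time-uniform Sobolev constant on a smaller ball; the local Moser iteration is run on that ball, the needed critical smallness comes for free from scale invariance of $\int|R|^{(n+2)/2}d\mu\,dt$ over the shrinking parabolic neighborhoods, and the contradiction is with the normalization $R^{(i)}(x^{(i)},1)=1$.

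A secondary point: "absorb the borderline term once $\tau$ is close to $T$" is too quick even granting a usable Sobolev inequality. In Moser iteration the absorption threshold deteriorates with the iteration level $\beta$, so a fixed smallness of $\norm{R}{\frac{n+2}{2}}$ cannot be used at every level. One needs the two-stage structure the paper uses (Lemmas 5.3 and 5.4): first a single integrability improvement at the critical exponent, from $\frac{n+2}{2}$ to $q=\frac{(n+2)^2}{2n}>\frac{n+2}{2}$, valid only under smallness; then a full iteration whose coefficient $f\approx 2R$ is now controlled in $L^q$ with $q>\frac{n}{2}+1$, where boundedness (not smallness) suffices. This bootstrap works because $f$ is comparable to the solution $u$ itself, a point worth making explicit if you rewrite the argument.
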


and

\begin{theorem}
 $\{(M^n,g(t)), 0 \leq t <T < \infty \}$ is a closed Ricci flow solution.
If
\begin{align*}
 \norm{Rm}{\alpha,\; M \times [0,T)} < \infty, \quad \alpha \geq
 \frac{n+2}{2},
\end{align*}
\label{theorem: 2} then this flow can be extended over time $T$.
\end{theorem}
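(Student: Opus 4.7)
My plan is to reduce Theorem~\ref{theorem: 2} to Theorem~\ref{theorem: 1}. Since $|R|\le C_n|Rm|$ pointwise, the hypothesis $\|Rm\|_{\alpha,\,M\times[0,T)}<\infty$ immediately yields $\|R\|_{\alpha,\,M\times[0,T)}<\infty$, so all I need is a uniform lower Ricci bound $Ric\ge -A$ on $M\times[0,T)$; then Theorem~\ref{theorem: 1} applies and extends the flow past $T$.

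To produce the lower Ricci bound I argue by contradiction, via a parabolic blowup in the spirit of Sesum. Assuming $\lambda_{\min}(Ric)$ is unbounded below, choose points $(x_k,t_k)$ with $Q_k:=|Rm|(x_k,t_k)\to\infty$ comparable to $|\lambda_{\min}(Ric)(x_k,t_k)|$, and apply a Perelman-style point-picking so that $|Rm|$ is essentially maximal on a suitable parabolic neighborhood of $(x_k,t_k)$ after rescaling. Set $\tilde g_k(s):=Q_k\,g(t_k+Q_k^{-1}s)$; then $|\widetilde{Rm}|(x_k,0)=1$, and since $t_k\uparrow T$ the rescaled time intervals exhaust $(-\infty,0]$. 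Perelman's no-local-collapsing theorem gives a uniform injectivity-radius bound at the basepoints, Shi's derivative estimates furnish higher-derivative bounds, and Hamilton's compactness theorem extracts a subsequential Cheeger--Gromov limit $(M_\infty,g_\infty(s),x_\infty)$ for $s\in(-\infty,0]$: a complete non-flat ancient Ricci flow of bounded curvature with $|Rm_\infty|(x_\infty,0)=1$ and a nonzero Ricci eigenvalue there.

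A direct scaling computation shows
\[
\|\widetilde{Rm}\|_{\alpha,\,M\times[-Q_k t_k,0]}^{\alpha}=Q_k^{(n+2)/2-\alpha}\,\|Rm\|_{\alpha,\,M\times[0,t_k]}^{\alpha}.
\]
For $\alpha>(n+2)/2$ the prefactor tends to $0$, so passing to the Cheeger--Gromov limit forces $Rm_\infty\equiv 0$, contradicting $|Rm_\infty|(x_\infty,0)=1$; this disposes of the supercritical case. In the critical case $\alpha=(n+2)/2$ the scaling factor equals $1$, so one only inherits $\|Rm_\infty\|_{\alpha,\,M_\infty\times(-\infty,0]}<\infty$ on a non-trivial, non-collapsed, complete ancient solution.

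The main obstacle is therefore the borderline exponent $\alpha=(n+2)/2$: I must rule out a non-trivial, non-collapsed, complete ancient Ricci flow with finite total $L^{(n+2)/2}$ norm of $Rm$. My intended attack is to combine Perelman's non-collapsing, now applied at every $s\in(-\infty,0]$, with a Hamilton-type doubling-time estimate (curvature cannot decay backward in time faster than a definite rate under a bounded-curvature assumption) to produce a parabolic neighborhood of $(x_\infty,0)$ of controlled parabolic size on which $|Rm_\infty|\ge c>0$; non-collapsing gives a uniform positive volume lower bound for unit balls, so this contributes a definite positive amount to $\int|Rm_\infty|^{(n+2)/2}\,dV\,ds$ per unit of past time, forcing the total integral to diverge and yielding the desired contradiction.
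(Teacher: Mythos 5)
Your scaling identity is correct and your supercritical argument ($\alpha>(n+2)/2$) works, but your treatment of the critical exponent $\alpha=(n+2)/2$ has a genuine gap, and it is precisely this case to which the theorem reduces (by H\"older). You claim that at $\alpha=(n+2)/2$ one ``only inherits'' a finite $L^{(n+2)/2}$ norm on the limit ancient solution, and then propose a speculative doubling-time/volume argument to rule that out. This misses the decisive observation used in the paper's proof: if you restrict to a \emph{fixed} rescaled time window, say $[-1,0]$, the corresponding original time window $[t^{(i)}-(Q^{(i)})^{-1},\,t^{(i)}]$ has length $(Q^{(i)})^{-1}\to0$, and by absolute continuity of the Lebesgue integral the finite global integral $\int_0^T\int_M|Rm|^{(n+2)/2}\,d\mu\,dt<\infty$ forces
\begin{align*}
\int_{t^{(i)}-(Q^{(i)})^{-1}}^{t^{(i)}}\int_M|Rm|^{(n+2)/2}\,d\mu\,dt\;\longrightarrow\;0 .
\end{align*}
By scale invariance this is exactly the rescaled integral over $B_{g^{(i)}(0)}(x^{(i)},1)\times[-1,0]$, so the limit satisfies $\bar{Rm}\equiv0$ on a parabolic ball, contradicting $|\bar{Rm}|(\bar x,0)=1$. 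Your phrasing of the scaling identity over the \emph{entire} rescaled interval $[-Q_k t_k,0]$ obscures this: one should not try to control the integral over an unbounded past but rather exploit that the relevant past is shrinking at the original scale.

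Beyond this, your overall route --- establishing a uniform lower Ricci bound so as to invoke Theorem~1 --- is a detour the paper does not take. The paper's Theorem~2 has a short, self-contained proof (a maximal-curvature blowup plus the absolute-continuity observation above), while Theorem~1 is proved separately by a considerably heavier argument involving a local Sobolev constant and parabolic Moser iteration. Even granting your point-picking of $(x_k,t_k)$ with $Q_k$ ``comparable'' to $|\lambda_{\min}(Ric)|$ (which you do not justify), your proposed Hamilton-type backward-in-time curvature bound plus non-collapsing estimate does not obviously force the $L^{(n+2)/2}$ space-time integral of the ancient limit to diverge: non-collapsing controls volume of unit balls but not the global measure at early times, and a pointwise lower bound on $|Rm_\infty|$ near $(x_\infty,0)$ on a fixed parabolic neighborhood does not by itself produce divergence over $(-\infty,0]$. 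You would need a substantive new argument here, and none is supplied. The simpler and correct move is the paper's: blow up at points of maximal $|Rm|$ and use that the finite space-time integral must vanish on shrinking time slabs near $T$.
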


These two theorems are optimal in some aspects as illustrated by
Example~\ref{example: 1} in the next section.

\begin{remark}
  In theorem~\ref{theorem: 1},~\ref{theorem: 2}, let
$\alpha = \infty$, we can recover  Sesum's and Hamilton's results.
\end{remark}


\noindent {\bf Organization} Let's sketch the outline of this note.
We first fix some notations in section 2. Then, in section 3, we
prove Theorem~\ref{theorem: 2} for all $n \geq 2$. In section 4, we
use no local collapsing theorem and Croke's argument to establish a
local Sobolev constant control. Then we use this control to develop
a general parabolic
 Moser iteration under Ricci flow in section 5.  Applying Moser iteration
 to $R$  in section 6, we prove Theorem~\ref{theorem: 1} for $n \geq
 3$.\\

\noindent{\bf Acknowledgements}: I would like to express my
gratitude to  my advisor, professor Xiuxiong Chen. He directed me to
this subject and brought this problem to my attention and even
showed me the main tools to handle this problem. I'm grateful to
professor Dan Knopf and professor Sigurd Angenent for their helpful
discussions.
I would like to thank Haozhao Li for pointing out some errors in the
ealier version of this note.

\section{Preliminary}
Let $M^n$ be a connected compact manifold without boundary.
$(M^n,g(t))$ is called a closed Ricci flow solution if the metric
satisfies the equation:
\begin{align}
  \frac{d g_{ij}}{dt} = -2R_{ij}.
  \label{eqn: ricciflow}
\end{align}
 By direct calculation, we have the evolution equations for
curvatures under Ricci flow:
\begin{align}
  \D{R}{t} &= \Lap R + 2|Ric|^2, \label{eqn: evolutionr}\\
  \D{R_{ij}}{t} &= \Lap R_{ij} + 2 R_{iklj}R_{kl} - 2 R_{ik}
  R_{kj}, \label{eqn: evolutionricci}\\
  \D{R_{ijkl}}{t} &= \Lap R_{ijkl} +2(B_{ijkl}-B_{ijlk} +B_{ikjl} -B_{iljk})  \notag \\
   & \quad \quad       -(R_{ip}R_{pjkl}
   +R_{jp}R_{ipkl}+R_{kp}R_{ijpl}+R_{lp}R_{ijkp}),
  \label{eqn: evolutionriemannian}
\end{align}
where $B_{ijkl} \triangleq -R_{ipqj} R_{kpql}$.

The evolution equation of volume element is
\begin{align}
 \D{d\mu}{t} = -R d\mu. \label{eqn: evolutionv}
\end{align}

For convenience, we define some norm of the space time manifold $M
\times [0,T)$ below.

\begin{definition}
Suppose $N \subset M$, for any measurable function $F$ defined on $N
\times [0, T)$ and $\alpha \geq 1$, we define
\begin{align*}
 &\norm{F}{\alpha,N \times [0,T)} \triangleq  (\int_0^T \int_N |F|^\alpha d\mu dt)^{\frac{1}{\alpha}},\\
 &\norm{F(\cdot,t)}{\alpha,N}  \triangleq (\int_N |F|_{g(t)}^\alpha d\mu_{g(t)})^{\frac{1}{\alpha}}, \\
 &F_{+} \triangleq \max \{F,0 \},\qquad
 F_{-} \triangleq \max \{-F,0 \}.
\end{align*}
\label{definition: 1}
\end{definition}

Now we are ready to give example to illustrate that Theorem 1.1 is
sharp in some aspects.
\begin{example}
     Let $ (S^n,g_s)$ be the space form of constant sectional curvature $1$.
      Now we start Ricci flow from metric $ (S^n,g_s)$. By direct calculation,
       $g(t)= (1-2(n-1)t)g_s$ is the Ricci flow solution. Therefore,
        $T=\frac{1}{2(n-1)}$ is the maximal existence time. However, we compute

\begin{align*}
 \norm{R}{\alpha,\; M \times [0, T)} &= \{\int_0^T \int_M |R|^\alpha d\mu dt \}^{\frac1\alpha}\\
 &=\{\int_0^T V(t) (\frac{n}{2(T-t)})^\alpha dt \}^{\frac1\alpha}\\
 &=\frac{n}{2} {V(0)}^{\frac{1}{\alpha}} T^{-\frac{n}{2 \alpha}} \{  \int_0^T (T-t)^{\frac{n}{2}-\alpha}  dt\}^{\frac1\alpha},
\end{align*}
therefore,
\begin{align*}
   \norm{R}{\alpha,\; M \times [0, T)}
   \left\{
   \begin{array}{ll}
    &=\infty, \quad  \alpha \geq \frac{n}{2}+1,\\
    &<\infty, \quad  \alpha < \frac{n}{2}+1.
   \end{array}
   \right.
\end{align*}
Moreover, $Ric \geq 0$. This suggests us that Theorem~\ref{theorem:
1} cannot be improved to $\alpha <\frac{n+2}{2}$.

Since $S^n$ is space form, $|Rm|^2 ={C(n)}^2 |R|^2$, then
\begin{align*}
 \norm{Rm}{\alpha,\; M \times [0, T)}= C(n) \norm{R}{\alpha,\; M \times [0, T)}.
\end{align*}
 Hence,
\begin{align*}
  \norm{Rm}{\alpha,\; M \times [0, T)}
   \left\{
   \begin{array}{ll}
    &=\infty, \quad  \alpha \geq \frac{n}{2}+1,\\
    &<\infty, \quad  \alpha < \frac{n}{2}+1.
   \end{array}
   \right.
\end{align*}
This implies Theorem~\ref{theorem: 2} can not be improved to $\alpha
<\frac{n+2}{2}$. \label{example: 1}
\end{example}

The uniform Sobolev constant control will play an important role in
our proof.

\begin{definition}
  Suppose $\{(M^n,g(t)), 0 \leq t <T < \infty \}$ is a closed Ricci flow
  solution, $ N \subsetneq M$. We say $\sigma$ is a uniform Sobolev constant for $N$ at each
  time slice, if
\begin{align}
   (\int_N |v|_{g(t)}^{\frac{2n}{n-2}} d\mu_{g(t)})^{\frac{n-2}{n}} \leq
       \sigma \int_N |\nabla v|_{g(t)}^2  d\mu_{g(t)},
   \label{eqn: sobolev}
\end{align}
for every function $ v \in W_0^{1,2}(N)$ and $0 \leq t < T$.
\end{definition}

If Ricci is bounded from below, we can control $\D{R}{t}$ by $R$.

\begin{property}
Suppose $Ric \geq -B$, let $\hat{R} = R+nB$, then
\begin{align}
   \D{\hat{R}}{t}  \leq  \Lap \hat{R} +2(\hat{R} -2B )\hat{R} +2nB^2.
\label{eqn: rhat}
\end{align}
\end{property}
\begin{proof}
 Choose an orthonormal basis to diagonalize $Ric$  such that
  $Ric = \diag\{\lambda_1, \cdots , \lambda_n\}$,
 then
\begin{align*}
  Ric + B I = \diag \{ \lambda_1+B, \cdots, \lambda_n+B \},
\end{align*}
where each term is nonnegative. Therefore,
\begin{align*}
    (\lambda_1+B)^2+ \cdots (\lambda_n+B)^2 \leq (\lambda_1+B +\cdots +\lambda_n
    +B)^2,
\end{align*}
consequently,
\begin{align*}
  \lambda_1^2 +\cdots + \lambda_n^2
  \leq  (\lambda_1+ \cdots +\lambda_n)^2 + 2(n-1)B(\lambda_1+ \cdots +
  \lambda_n) +n(n-1)B^2,
\end{align*}
i.e.
\begin{align}
   |Ric|^2 &\leq R^2 +2(n-1)BR +n(n-1)B^2 \notag \\
      &=  \hat{R}^2 -2B \hat{R} + nB^2.
\label{eqn: riccirhat}
\end{align}

From inequality (\ref{eqn: evolutionr}), we have
\begin{align*}
   \D{\hat{R}}{t}  &= \D{R}{t}  \\
           &= \Lap R + 2 |Ric|^2  \\
           &\leq \Lap \hat{R} +2(\hat{R}^2 -2B \hat{R} +nB^2) \\
           &= \Lap \hat{R} +2(\hat{R} -2B )\hat{R} +2nB^2.
\end{align*}
\end{proof}

  In~\cite{Pe1}, Perelman proves the fundamental
  no local collapsing Theorem:
\begin{theorem}
  $\{(M^n,g(t)), 0 \leq t <T < \infty \}$ is a closed Ricci flow
  solution. Then there exists a $\kappa >0$, such that for any
  $(x, t) \in M \times [0,T), r>0$, if
$ \sup_{y \in B_{g(t)}(x, r)}|Rm|(y,t) \leq r^{-2},$ then
\begin{align*}
   \frac{\Vol_{g(t)}(B_{g(t)}(x,r)) }{r^n} \geq \kappa.
\end{align*}
\label{theorem: noncollapsing1}
\end{theorem}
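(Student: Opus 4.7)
The plan is to follow Perelman's original argument based on monotonicity of the $\mathcal{W}$-entropy. I would introduce
\[
 \mathcal{W}(g,f,\tau) = \int_M \bigl[\tau(R+|\nabla f|^2)+f-n\bigr]\,(4\pi\tau)^{-n/2}e^{-f}\,d\mu,
\]
subject to the constraint $\int_M (4\pi\tau)^{-n/2}e^{-f}\,d\mu=1$, and set $\mu(g,\tau)=\inf_f \mathcal{W}(g,f,\tau)$. The first main step is to prove that along Ricci flow coupled with the conjugate heat equation on $f$ (with $\tau(t)=\tau_0-t$), the functional $\mathcal{W}$ is nondecreasing in $t$: a Bochner-type integration by parts rearranges $\frac{d}{dt}\mathcal{W}$ into a nonnegative integral of $|Ric+\nabla^2 f-\tfrac{1}{2\tau}g|^2$ against the Gaussian weight. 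Since $\tau(t)+t$ is constant along such a flow, this yields $\mu(g(t),\tau)\geq\mu(g(0),\tau+t)$, and since $\mu(g(0),\cdot)$ is finite and continuous on any compact subinterval of $(0,\infty)$, one obtains a uniform lower bound $\mu(g(t),\tau)\geq -C$ valid for all $t\in[0,T)$ and all $\tau$ in a prescribed bounded range.

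Given this control on $\mu$, I would argue by contradiction. Assuming no uniform $\kappa$ exists, there are sequences $(x_k,t_k,r_k)$ with $t_k\in[0,T)$, $\sup_{B_{g(t_k)}(x_k,r_k)}|Rm|\leq r_k^{-2}$, and $\epsilon_k:=\Vol_{g(t_k)}(B_{g(t_k)}(x_k,r_k))/r_k^n\to 0$. Without loss of generality $r_k$ is bounded (otherwise the curvature bound on large balls forces near-flatness and the claim is trivial). I would then test $\mathcal{W}(g(t_k),\cdot,r_k^2)$ on $f_k=c_k-2\log\phi(d_{g(t_k)}(x_k,\cdot)/r_k)$, where $\phi$ is a standard cutoff supported in $[0,1)$ and $c_k$ is determined by the normalization; the collapse of volume forces $c_k\to\infty$. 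Using $|R|\leq n(n-1)r_k^{-2}$ on the support and the fact that $|\nabla f_k|$ scales like $r_k^{-1}$, a direct computation gives $\mathcal{W}(g(t_k),f_k,r_k^2)\leq -c_k+O(1)\to -\infty$, contradicting the uniform lower bound on $\mu(g(t_k),r_k^2)$ obtained from monotonicity.

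The principal obstacle is establishing the monotonicity of $\mathcal{W}$ rigorously: it requires both the existence of a smooth minimizer for each $(g,\tau)$, via elliptic analysis of the Euler-Lagrange equation with exponential nonlinearity and a direct compactness argument, and the careful Bochner-type computation that regroups the time derivative as a manifest sum of squares. The test-function step is more elementary but must be arranged so that the cutoff gradient and ambient curvature contributions are dominated by the divergent $-c_k$ term, which is precisely the point at which the hypothesis $|Rm|\leq r_k^{-2}$ on the ball is used essentially.
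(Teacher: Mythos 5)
The paper does not prove this theorem itself: it is quoted from Perelman~\cite{Pe1} (see also~\cite{KL}), and your sketch is precisely Perelman's $\mathcal{W}$-entropy argument — monotonicity under the coupled conjugate heat flow, a uniform lower bound on $\mu(g(t),\tau)$, then a cutoff test function to drive $\mathcal{W}$ to $-\infty$ in a collapsed ball — so you are reproducing the cited proof rather than giving an alternative. One small bookkeeping slip: with $f_k=c_k-2\log\phi$ and the normalization $(4\pi r_k^2)^{-n/2}\int e^{-f_k}\,d\mu=1$, volume collapse forces $c_k\to-\infty$ (not $+\infty$) and $\mathcal{W}\approx c_k+O(1)\to-\infty$, which is the same contradiction; and the dismissal of unbounded $r_k$ should be phrased as restricting to scales $r^2\lesssim T$ (as the finite-time hypothesis permits), since the uniform lower bound on $\mu$ is only claimed on a bounded $\tau$-range.
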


Actually, Perelman has already noticed that the same conclusion
still holds if we replace the Riemannian curvature by scalar
curvature. That is the next theorem.
\begin{theorem}
  $\{(M^n,g(t)), 0 \leq t <T < \infty \}$ is a closed Ricci flow
  solution. Then there exists a $\kappa >0$, such that for any
  $(x, t) \in M \times [0,T), r>0$, if
$ \sup_{y \in B_{g(t)}(x, r)}|R(y,t)| \leq r^{-2},$ then
\begin{align*}
   \frac{\Vol_{g(t)}(B_{g(t)}(x,r)) }{r^n} \geq \kappa.
\end{align*}
\label{theorem: noncollapsing2}
\end{theorem}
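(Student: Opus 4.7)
The plan is to adapt Perelman's proof of Theorem~\ref{theorem: noncollapsing1} from~\cite{Pe1}, observing that the scalar curvature hypothesis used here is already enough to run the entire argument. The key object is Perelman's $\mathcal{W}$-entropy,
$$\mathcal{W}(g,f,\tau) = \int_M \bigl[\tau(R + |\nabla f|^2) + f - n\bigr] (4\pi\tau)^{-n/2} e^{-f} \, d\mu,$$
subject to the normalization $\int_M (4\pi\tau)^{-n/2} e^{-f} d\mu = 1$, together with the invariants $\mu(g,\tau) = \inf_f \mathcal{W}(g,f,\tau)$ and $\nu(g,\tau) = \inf_{0 < \tau' \leq \tau} \mu(g,\tau')$. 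Fixing $T_0 := T + 1$, Perelman's monotonicity tells us that along the flow with $\tau(t) = T_0 - t$ the quantity $\mu(g(t), \tau(t))$ is non-decreasing in $t$, so that for every $t_0 \in [0,T)$ and $r^2 \in (0, T_0]$,
$$\mu(g(t_0), r^2) \geq \nu(g(0), T_0) \;=: -C_0,$$
with $C_0$ depending only on $n$, $g(0)$, and $T$.

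For the matching upper bound I would use Perelman's standard test function. Fix $(x_0, t_0)$ and $r$ as in the hypothesis, pick a smooth cutoff $\phi$ with $\phi \equiv 1$ on $B_{g(t_0)}(x_0, r/2)$, $\supp \phi \subset B_{g(t_0)}(x_0, r)$, and $|\nabla \phi| \leq C/r$, and define $f$ by $e^{-f/2} = c \, \phi \, e^{-d(\cdot, x_0)^2/(8 r^2)}$, the constant $c$ chosen so that the normalization at $\tau = r^2$ holds. The crucial observation is that $e^{-f}$ is supported inside $B_{g(t_0)}(x_0, r)$, so the only curvature-dependent term in $\mathcal{W}(g(t_0), f, r^2)$,
$$\int_M \tau R \, (4\pi\tau)^{-n/2} e^{-f} d\mu,$$
is bounded in absolute value by $\sup_{B(x_0, r)} |R| \cdot r^2 \leq 1$ using \emph{only} the scalar curvature hypothesis; no bound on $|Rm|$ is required. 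A direct calculation identical to Perelman's then estimates the remaining terms and produces
$$\mu(g(t_0), r^2) \leq \mathcal{W}(g(t_0), f, r^2) \leq C(n) - \log\frac{\Vol_{g(t_0)}(B_{g(t_0)}(x_0, r))}{r^n}.$$

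Combining with the lower bound gives the desired $\Vol_{g(t_0)}(B_{g(t_0)}(x_0, r))/r^n \geq e^{-C_0 - C(n)} =: \kappa$, at least for $r \leq \sqrt{T+1}$; the regime $r > \sqrt{T+1}$ on a compact manifold collapses to a fixed-scale volume estimate and can be absorbed into $\kappa$. The main obstacle here is really a conceptual rather than a technical one: I have to verify that every step of Perelman's test-function calculation — the construction of $\phi$, the choice of $c$, the gradient estimate $\int |\nabla f|^2 \tau e^{-f} d\mu = O(1)$, and the reduction of the $f - n$ term to $-\log(\Vol / r^n) + O(1)$ — is driven entirely by the geometry of a Euclidean-comparable ball and by $\sup_B |R| \cdot r^2 \leq 1$, never by a pointwise bound on $|Rm|$. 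Once that is confirmed, the compact support of $\phi$ inside $B_{g(t_0)}(x_0, r)$ does the rest.
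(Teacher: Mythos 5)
The paper itself does not prove Theorem~\ref{theorem: noncollapsing2}; it simply cites Kleiner--Lott and Sesum--Tian. Your proposal correctly identifies the approach those references use: run Perelman's $\mathcal{W}$-entropy argument with a test function compactly supported in $B_{g(t_0)}(x_0,r)$, so that the only curvature term appearing, $\int \tau R\,(4\pi\tau)^{-n/2}e^{-f}\,d\mu$, is controlled by $\sup_{B}|R|\cdot r^2 \leq 1$ alone. That observation is the heart of the matter, and you state it clearly.

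However, there is a genuine gap in the step you describe as ``a direct calculation identical to Perelman's.'' When one carries out the estimate of the gradient term $\int \tau|\nabla f|^2(4\pi\tau)^{-n/2}e^{-f}\,d\mu$ and of the $(f-n)$ term with your test function $e^{-f/2}=c\,\phi\,e^{-d^2/(8r^2)}$, the normalization constant satisfies $c^2\,(4\pi r^2)^{-n/2}\asymp 1/\Vol_{g(t_0)}(B(x_0,r/2))$, while the integrals of $|\nabla f|^2 e^{-f}$ and of $(-2\log\phi)e^{-f}$ are both controlled by $c^2(4\pi r^2)^{-n/2}\Vol_{g(t_0)}(B(x_0,r))$. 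One therefore needs a \emph{volume-doubling} bound
\begin{align*}
   \frac{\Vol_{g(t_0)}(B(x_0,r))}{\Vol_{g(t_0)}(B(x_0,r/2))} \leq C(n)
\end{align*}
to reach the conclusion $\mathcal{W}\leq C(n)-\log(\Vol(B(x_0,r))/r^n)$. In Perelman's Theorem~\ref{theorem: noncollapsing1}, the hypothesis $|Rm|\leq r^{-2}$ gives a Ricci lower bound on $B(x_0,r)$, and Bishop--Gromov supplies the doubling for free. Under the weaker hypothesis $|R|\leq r^{-2}$, Ricci is not bounded below and no such comparison is available, so this step fails as stated. The missing ingredient, present in both Kleiner--Lott and Sesum--Tian, is the radius-reduction (or point-picking) argument: if $\Vol(B(x_0,r))<\kappa r^n$ and the doubling bound fails at scale $r$, one passes to $r/2$, then $r/4$, etc.\ --- noting that the hypothesis $\sup_{B(x_0,r')}|R|\leq (r')^{-2}$ is preserved as $r'$ decreases --- and since $\Vol(B(x_0,s))/s^n\to\omega_n$ as $s\to 0$, this terminates at a scale $r_0\leq r$ where both $\Vol(B(x_0,r_0))\leq \kappa' r_0^n$ and the doubling bound hold simultaneously. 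Running your test-function computation at scale $r_0$ then closes the argument. Without this reduction the upper bound on $\mathcal{W}$, and hence the theorem, does not follow.
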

The proof of Theorem~\ref{theorem: noncollapsing2} can be found
in~\cite{KL}, ~\cite{ST}. We will use Theorem~\ref{theorem:
noncollapsing2} to get Sobolev constant control.

\section{ Proof of Theorem~\ref{theorem: 2} for $n \geq 2$}
\begin{proof}
By H\"{o}lder's inequality, $\norm{Rm}{\alpha, M \times [0,T)} <
\infty $ implies $\norm{Rm}{\frac{n+2}{2}, M \times [0,T)} < \infty$
if $\alpha > \frac{n+2}{2}$. So we only need to prove
Theorem~\ref{theorem: 2} for $\alpha = \frac{n+2}{2}$.

 We argue by contradiction.

 Suppose $T$ is the maximal existence
time. Then there is a sequence  $(x^{(i)}, t^{(i)})$ with $\lim_{i
\to \infty} t^{(i)} =T$ and $\lim_{i \to \infty}|Rm|^{(i)} =\infty$.
Moreover,
\begin{align*}
    |Rm|(x^{(i)}, t^{(i)}) = \max_{(x,t) \in M \times [0,
    t^{(i)}]} |Rm|(x,t).
\end{align*}
Let
\begin{align*}
Q^{(i)} \triangleq |Rm|(x^{(i)}, t^{(i)}),\\
g^{(i)}(t) \triangleq Q^{(i)} g((Q^{(i)})^{-1} t + t^{(i)}).
\end{align*}
  By Theorem~\ref{theorem: noncollapsing1}, we have uniform lower bound  of injectivity radius at points $(x^{(i)},t^{(i)})$ for the sequence $\{((M^n, x^{(i)}), g^{(i)}(t)), \; -Q^{(i)}t^{(i)} \leq t \leq 0 \}$ . So it
  subconverges to an ancient Ricci flow solution $\{ ((\bar{M}, \bar{x}), \bar{g}(t)),-\infty \leq t \leq 0
  \}$.
Therefore, by the scaling invariance of $\int_0^T \int_M
|Rm|^{\frac{n+2}{2}} d\mu dt$, we have
\begin{align}
 \int_{-1}^0 \int_{B_{\bar{g}(0)(\bar{x}, 1)}} |\bar{Rm}|^{\frac{n+2}{2}} d \bar{\mu} dt
 &\leq \lim_{i \to \infty} \int_{-1}^0 \int_{B_{g^{(i)}(0)}(x^{(i)}, 1)} |Rm|_{g^{(i)}(t)}^{\frac{n+2}{2}} d
 \mu_{g^{(i)}(t)}
 dt \notag \\
 &=\lim_{i \to \infty} \int_{t^{(i)}-(Q^{(i)})^{-1}}^{t^{(i)}} \int_{B_{g(t^{(i)})}(x^{(i)}, (Q^{(i)})^{-\frac12})}
 |Rm|^{\frac{n+2}{2}} d\mu dt \notag \\
 &\leq \lim_{i \to \infty} \int_{t^{(i)}-(Q^{(i)})^{-1}}^{t^{(i)}} \int_M |Rm|^{\frac{n+2}{2}} d\mu
 dt  \notag \\
 &=0. \label{eqn: Rmvanish}
\end{align}
The last equality holds since $\int_0^T \int_M |Rm|^{\frac{n+2}{2}}
d\mu dt < \infty$ and $\lim_{i \to \infty} (Q^{(i)})^{-\frac12} =0$.
Since $(\bar{M}, \bar{g}(t))$ is a smooth Riemannian manifold for
each $t \leq 0$, equality (\ref{eqn: Rmvanish}) implies that
$|\bar{Rm}| \equiv 0$ on the parabolic ball $B_{\bar{g}(0)(\bar{x},
1)} \times [-1,0]$. In particular, $|\bar{Rm}|(\bar{x},0) =0$. On
the other hand,
\begin{align*}
  |\bar{Rm}|(\bar{x},0) = \lim_{i \to \infty} |Rm|_{g^{(i)}}(x^{(i)},
  0) =1.
\end{align*}
So we get a contradiction.
\end{proof}

When dimension is $2$, $Rm=R$. Thus  Theorem~\ref{theorem: 1} and
Theorem~\ref{theorem: 2} are the same. So we have already proved
Theorem~\ref{theorem: 1} for $n=2$.   When $n \geq 3$, $R$ and $Rm$
are different. Accordingly we have to develop some new techniques to
prove Theorem~\ref{theorem: 1}. Moser iteration will play a critical
role in our proof. In order to apply Moser iteration, we need to get
a local Sobolev constant control first.

\section{Local Sobolev Constant Control}

 In this section, we discuss how to control isoperimetric constant
 locally. By the equivalence of  isoperimetric constant and
 Sobolev constant, we get the local control for Sobolev constant. The
 following argument comes from Croke's paper~\cite{Cr}.

 \begin{definition}
   Suppose $(N,\partial N, g)$ be a smooth compact manifold with smooth
   boundary and Riemannian metric $g$.
   \begin{align*}
      \Phi(N) \triangleq \inf_{\Omega \Subset N} \frac{
      \Area(\partial{\Omega})^{n}}{\Vol(\Omega)^{n-1}}.
   \end{align*}
 Let $UN \stackrel{\pi}{\to} N$ represent the unit sphere bundle with the canonical
 measure. For $v \in UN$, let $\gamma_v$ be the geodesic with $\gamma_v'(0)
 =v$, let $\zeta^t(v)$ represent the geodesic flow, i.e. $\zeta^t(v) =
 \gamma_v'(t)$.  Let $l(v)$ be the smallest value of $t >0$ (possibly $\infty$) such
 that $\gamma_v(t) \in \partial N$.  Note $\zeta^t(v)$ is defined
 for $t \leq l(v)$. Let
 \begin{align*}
 \tilde{l}(v) & \triangleq \sup \{t | \gamma_v \; \textrm{minimizes up to} \; t \;
\textrm{and} \;t \leq l(v) \}, \\
 \tilde{U}M & \triangleq \{ v \in UM | \tilde{l}(v) = l(v)\},
 \qquad \tilde{U}_p  \triangleq \pi|_{\tilde{U}M}^{-1}(p),\\
 \tilde{\omega}_p & \triangleq   \frac{\Area{\tilde{U}_p}}{\Area{U_p}}, \qquad
 \tilde{\omega}  \triangleq \inf_{p \in N} \tilde{\omega}_p,\\
 \alpha(n) & \triangleq \textrm{ volume of unit sphere of dimension
 } n.
 \end{align*}

 For $p \in \partial N$, let $N_p$ be the inwardly pointing unit
 normal vector. Let $U^+ \partial N \to \partial N$ be the bundle of
 inwardly pointing unit vectors. That is,
\[
  U^+ \partial N = \{ u \in UN|_{\partial N} | \langle u, N_{\pi(u)} \rangle \geq 0
  \}.
\]
  $U^+ \partial N$ has natural metric structure.
 \end{definition}

  This $\tilde{\omega}$ is related to $\Phi(N)$ closely. If we have
  a control over $\tilde{\omega}$, then it's easy to get a control
  for $\Phi(N)$.

\begin{proposition}
    For $(N,\partial N, g)$ we have
\begin{align}
 \int_{\tilde{U}N} f(v) dv = \int_{U^+ \partial N}
 \int_0^{\tilde{l}(u)} f(\zeta^r(u)) <u, N_{\pi(u)}> dr du,
 \label{eqn: integral1}
\end{align}
where $f$ is any integrable function. In particular for $f \equiv
1$, we have
\begin{align}
 \Vol(\tilde{U}M) = \int_{U^+ \partial N} \tilde{l}(u) \langle u,
 N_{\pi(u)} \rangle du.
 \label{eqn: integral2}
\end{align}
\end{proposition}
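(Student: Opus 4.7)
The proof proceeds by a change of variables using the geodesic flow issuing from the boundary. Define
\[
\Psi : \bigl\{(u,r) \in U^+\partial N \times \R^+ : 0\leq r \leq \tilde{l}(u)\bigr\} \longrightarrow UN,\qquad \Psi(u,r) = \zeta^r(u).
\]
The plan has two ingredients: (i) $\Psi$ parametrizes $\tilde{U}N$ bijectively up to a measure-zero set, and (ii) its Jacobian with respect to the product measure is the cosine factor $\langle u, N_{\pi(u)}\rangle$. Granting both, (\ref{eqn: integral1}) is the ordinary change-of-variables formula applied to $f$, and (\ref{eqn: integral2}) follows upon taking $f \equiv 1$.

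For (i), given $v\in \tilde{U}N$ in the interior, I trace $\gamma_v$ backward from $\pi(v)$ to its first crossing with $\partial N$, at parameter $-r$ (with $r > 0$); set $u=\gamma_v'(-r)\in U^+\partial N$. Then $\Psi(u,r)=v$ and this is the unique preimage under $\Psi$. That $r\leq \tilde{l}(u)$ and that every $(u,r)$ in the parametrization domain maps into $\tilde{U}N$ both follow from careful use of the elementary principle ``a subsegment of a minimizing geodesic is itself minimizing,'' combined with the definition of $\tilde{U}N$; the measure-zero exceptional set consists of vectors whose backward or forward geodesic lands on the cut locus or meets $\partial N$ tangentially.

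For (ii), the key input is that the geodesic flow on $UN$ preserves the Liouville (Sasaki) measure $dv$. Viewing $U^+\partial N\subset UN|_{\partial N}$ as a hypersurface transverse to the flow and introducing flow-box coordinates $(u,r)$ adapted to this transversal, one obtains
\[
\Psi^{*}(dv)=\langle u, N_{\pi(u)}\rangle\, du\, dr.
\]
The cosine factor arises as the flux of the geodesic vector field $X$ through $UN|_{\partial N}$ at $u$: since $d\pi(X_u)=u\in T_{\pi(u)}N$, its inward-normal component is exactly $\langle u, N_{\pi(u)}\rangle$. Substituting into the change-of-variables formula for $\int_{\tilde{U}N} f\, dv$ delivers (\ref{eqn: integral1}).

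I expect the Jacobian identity in (ii) to be the main technical step. A conceptual route uses the canonical contact $1$-form on $UN$, whose Reeb vector field is the geodesic field; its interior product with the Liouville volume form, restricted to $UN|_{\partial N}$, yields the flux factor directly. A more hands-on alternative is to work in Fermi coordinates along $\partial N$, where the metric has a product-like normal form and the Jacobian of $(q,\eta,r)\mapsto \exp_q(r\eta)$ can be read off by an explicit determinant computation.
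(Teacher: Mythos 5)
The paper offers no proof of this proposition at all: it is quoted from Berger and Santal\'o (the references given right after the statement), so your attempt is measured against the standard argument in those sources. Your ingredient (ii) is exactly that standard mechanism --- invariance of the Liouville measure under the geodesic flow plus the flux factor $\langle u, N_{\pi(u)}\rangle$ through the transversal $U^+\partial N$ --- and, although you only sketch the Jacobian identity rather than prove it, the outline (contact form/flow-box or Fermi coordinates) is the correct route.

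The genuine gap is in your step (i). For $v\in\tilde{U}N$ you trace $\gamma_v$ backward to its first crossing of $\partial N$ at parameter $-r$, set $u=\gamma_v'(-r)$, and assert that $r\le\tilde{l}(u)$ follows from ``a subsegment of a minimizing geodesic is minimizing.'' That principle only controls subsegments of the \emph{forward} minimizing segment of $\gamma_v$; it says nothing about the backward piece from $\pi(u)$ to $\pi(v)$, and the claim is false in general. For instance, let $N$ be the round $S^2$ with a small disc about the north pole removed, and let $v$ be tangent to a great circle near its exit point, so that the exit-side segment has length less than $\pi$ (hence minimizing, so $v\in\tilde{U}N$) while the entry-side segment has length greater than $\pi$; then the backward segment is not minimizing and $r>\tilde{l}(u)$. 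What your map $\Psi$ actually parametrizes, injectively up to measure zero, is the set of vectors whose \emph{backward} segment to $\partial N$ is minimizing, which is the image of $\tilde{U}N$ under the flip $v\mapsto -v$, not $\tilde{U}N$ itself. The repair is to invoke time-reversal symmetry: $v\mapsto -v$ preserves the Liouville measure and carries $\tilde{U}N$ onto the parametrized set, which immediately yields (\ref{eqn: integral2}) and yields (\ref{eqn: integral1}) for flip-invariant integrands (after composing the integrand with $v \mapsto -v$ in general); this is also all that is needed where the proposition is used in Lemma~\ref{lemma: psicontrol}, since there the boundary-side expression only needs to be dominated by an integral over $UN$ of a nonnegative function. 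As written, however, your change of variables computes the integral over the wrong set, so the identification step must be corrected along these lines.
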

This formula occurs in~\cite{Be1}, p.286, and ~\cite{Sa},
pp.336-338.

\begin{proposition}
   Let $N^n$ be a Riemannian manifold and $u \in UN$. Then for every
   $l \leq C(u)$ (the distance to the cut locus in the direction
   $u$):
\begin{align}
   \int_{x=0}^{x= l} \int_{z=0}^{z=l-x} F(\zeta^x(u), z) dz dx \geq
   C_1(n)\cdot \frac{l^{n+1}}{\pi^{n+1}},
   \label{eqn: integral3}
\end{align}
where $C_1(n) = \frac{\pi \alpha(n)}{2 \alpha(n-1)}$. $F(v,z)$ is
the volume form in polar coordinates,i.e.,
\[
    \int_{U_p} \int_0^{C(v)} F(v,z) dz dv = \Vol(M).
\]
\end{proposition}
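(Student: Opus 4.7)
The plan is to reduce the inequality to a comparison with the round sphere $S^n(l/\pi)$, where equality holds, using a Berger-type Jacobi-field comparison together with a direct calculation on the sphere.

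First I would record the sphere baseline. On $S^n(l/\pi)$ one has $F(v,z) = ((l/\pi)\sin(\pi z/l))^{n-1}$, independent of basepoint and unit direction. Changing variables $s = x+z$ turns the triangular region $\{0\leq x,\, z\geq 0,\, x+z\leq l\}$ into $\{0\leq x\leq s\leq l\}$, and a further substitution $r = s-x$ gives
\[
\int_0^l\!\int_0^s \left(\tfrac{l}{\pi}\sin\tfrac{\pi(s-x)}{l}\right)^{n-1} dx\,ds \;=\; \int_0^l (l-r)\left(\tfrac{l}{\pi}\sin\tfrac{\pi r}{l}\right)^{n-1} dr.
\]
Setting $t=\pi r/l$ and invoking the symmetry identity $\int_0^\pi(\pi-t)\sin^{n-1}t\,dt = \tfrac{\pi}{2}\int_0^\pi\sin^{n-1}t\,dt = \tfrac{\pi}{2}\,\alpha(n)/\alpha(n-1)$ (obtained by the substitution $t\mapsto \pi-t$ and averaging), this evaluates to exactly $C_1(n)\,l^{n+1}/\pi^{n+1}$.

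Next I would establish the Berger-type comparison controlling $F(\zeta^x(u),z)$ from below by the corresponding sphere quantity. Because $l\leq C(u)$, the geodesic $\gamma_u|_{[0,l]}$ is minimizing, so in particular there are no conjugate points on $[0,l]$, and the index form is positive semi-definite on perpendicular vector fields vanishing at the endpoints $0$ and $l$. Testing this positivity against the sine-weighted fields $\sin(\pi\cdot/l)\,E$ with $E$ running over a parallel orthonormal frame perpendicular to $\gamma_u'$, and unpacking the resulting Sturm/Berger comparison for the transverse Jacobi determinant, one obtains a sine-type lower bound of the form
\[
F(\zeta^x(u),z) \;\geq\; \left(\tfrac{l}{\pi}\sin\tfrac{\pi z}{l}\right)^{n-1},
\]
where the normalization length $l$ (rather than $l-x$ or $z$) is forced by the minimality on the \emph{full} interval $[0,l]$. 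Substituting this bound into the sphere computation of the previous paragraph gives the conclusion.

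The principal obstacle is the Berger comparison. A naive application of Rauch II would require an upper bound on sectional curvature that is not available; instead one needs Berger's refined argument, which uses only the absence of conjugate points on the full interval $[0,l]$ rather than on sub-intervals $[x,x+z]$ — this is crucial, since replacing the normalization $l$ by $l-x$ loses a factor of $(n+1)/2$ in the final constant and falls short of $C_1(n)$. Once the sharp comparison with length $l$ is in hand, the remaining argument is the direct integration carried out above.
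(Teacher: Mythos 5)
The paper does not actually prove this proposition; it quotes it from Berger--Kazdan (Besse, Appendix~D, via Croke), so your argument has to stand on its own. The sphere baseline computation is correct and does show that equality holds on $S^n(l/\pi)$ with the stated constant $C_1(n)$. The fatal step is the pointwise comparison
\[
F(\zeta^x(u),z)\ \geq\ \Bigl(\tfrac{l}{\pi}\sin\tfrac{\pi z}{l}\Bigr)^{n-1},
\]
which is false, and no index-form or Sturm argument can deliver it, because the hypothesis $l\le C(u)$ gives only the absence of conjugate points on $[0,l]$ (equivalently, positive semi-definiteness of the index form), and that is compatible with $F$ being far below the sine profile in the interior. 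Concretely, for $n=2$ the quantity $F(\zeta^0(u),z)$ is the scalar Jacobi field $j$ with $j(0)=0$, $j'(0)=1$, $j''+Kj=0$. Take $K$ to be (a smoothing of) $\mu\,\delta_{\epsilon}$ with $\mu=\tfrac{l}{\epsilon(l-\epsilon)}$: then $j(t)=t$ on $[0,\epsilon]$ and $j(t)=\epsilon\,\tfrac{l-t}{l-\epsilon}$ on $[\epsilon,l]$, so $j>0$ on $(0,l)$ with first conjugate point exactly at $l$, yet $j(l/2)\approx \epsilon/2$, which for small $\epsilon$ is far below $\tfrac{l}{\pi}\sin\tfrac{\pi}{2}=\tfrac{l}{\pi}$. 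This configuration is consistent with the inequality $\int_0^l K\sin^2\tfrac{\pi t}{l}\,dt\le \tfrac{\pi^2}{2l}$ that your test fields $\sin(\pi t/l)E$ actually produce, so positivity of the index form on $[0,l]$ genuinely does not imply your bound.

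This is also why the proposition is not a near-triviality: the true mechanism in Berger--Kazdan is Kazdan's inequality, an \emph{integral} inequality for solutions of the matrix Jacobi equation $A''+RA=0$, $A(0)=0$, $A'(0)=I$ with $\det A>0$ on $(0,l)$, which bounds the full double integral $\int_0^l\int_0^{l-x}$ of the densities from below by the corresponding sphere integral even though no pointwise domination holds (in the example above the deficit at small $x$ is compensated by the contribution from $x>\epsilon$, where the geodesic is flat and the density is $z^{n-1}\ge(\tfrac{l}{\pi}\sin\tfrac{\pi z}{l})^{n-1}$). To complete a proof along your lines you would have to establish or quote Kazdan's inequality itself; a Rauch/Berger pointwise comparison cannot substitute for it.
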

 The proof can be found in Berger's work~\cite{Be2} (Appendix D).

\begin{lemma}
 For $(N,\partial N, g)$ we have the isoperimetric inequality:
\begin{align}
   \frac{\Area(\partial N)^n}{\Vol(N)^{n-1}} \geq C_2(n)
   \tilde{\omega}^{n+1},
   \label{eqn: psi}
\end{align}
where $C_2(n) = 2^{n-1} \frac{\alpha(n-1)^n}{\alpha(n)^{n-1}}$.
\label{lemma: psicontrol}
\end{lemma}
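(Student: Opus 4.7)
Following Croke's approach, the plan is to combine the two identities \eqref{eqn: integral1} and \eqref{eqn: integral2} with the volume lower bound \eqref{eqn: integral3} applied to geodesics issuing from $\partial N$. Set $V := \Vol(\tilde U N)$. Integrating the inequality $\Area(\tilde U_p) \geq \tilde\omega\,\alpha(n-1)$ fiberwise over $N$ yields the elementary lower bound $V \geq \tilde\omega\,\alpha(n-1)\Vol(N)$, whereas \eqref{eqn: integral2} expresses $V$ as the boundary integral $\int_{U^+\partial N} \tilde l(u)\,\langle u, N_{\pi(u)}\rangle\,du$. I would bridge these two expressions by a Hölder estimate that exchanges a moment of $\tilde l$ for a factor of $\Area(\partial N)$; the moment itself will be bounded by $\Vol(N)^2$ via \eqref{eqn: integral3}.

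For the moment bound, apply \eqref{eqn: integral3} with $l = \tilde l(u) \leq C(u)$ to each $u \in U^+\partial N$ (the cut-distance hypothesis is automatic since $\gamma_u$ minimizes up to $\tilde l(u)$) and integrate against $\langle u, N_{\pi(u)}\rangle\,du$. The pivotal geometric observation is that $\tilde l(\zeta^x(u)) = \tilde l(u) - x$ whenever $u \in U^+\partial N$ and $0 \leq x \leq \tilde l(u)$, because a subsegment of a minimizing geodesic remains minimizing and does not leave $N$. Setting
$$f(v) := \int_0^{\tilde l(v)} F(v, z)\,dz,$$
this identifies the inner double integral in the integrated \eqref{eqn: integral3} as $f(\zeta^x(u))$, and then \eqref{eqn: integral1} converts the triple integral into $\int_{\tilde U N} f(v)\,dv = \int_N \bigl(\int_{\tilde U_p} f(v)\,dv\bigr)\,dp$. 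Since the exponential map based at $p$, restricted to $\{(v,z) : v \in \tilde U_p,\, z \in [0, \tilde l(v)]\}$, is an injection into $N$ (we are inside the cut locus), the inner integral is at most $\Vol(N)$, yielding
$$\int_{U^+\partial N} \tilde l(u)^{n+1}\,\langle u, N_{\pi(u)}\rangle\,du \;\leq\; \frac{\pi^{n+1}}{C_1(n)}\,\Vol(N)^{2}.$$

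To conclude, apply Hölder's inequality with exponents $(n+1,(n+1)/n)$ to the measure $d\mu := \langle u, N_{\pi(u)}\rangle\,du$, using that a fiberwise cosine integration gives $\mu(U^+\partial N) = \tfrac{\alpha(n-2)}{n-1}\Area(\partial N)$. Chaining this with the lower bound $V \geq \tilde\omega\,\alpha(n-1)\Vol(N)$ and the moment bound just obtained, raising to the $(n+1)$-st power and dividing by $\Vol(N)^2$, one arrives at \eqref{eqn: psi}; substituting $C_1(n) = \pi\alpha(n)/(2\alpha(n-1))$ and invoking the sphere-volume recursion $\alpha(n) = 2\pi\,\alpha(n-2)/(n-1)$ then simplifies the constant exactly to $C_2(n) = 2^{n-1}\alpha(n-1)^n/\alpha(n)^{n-1}$. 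The main obstacle I anticipate is the bookkeeping in the middle paragraph: verifying that $\tilde l(\zeta^x(u)) = \tilde l(u) - x$ holds in precisely the form needed to feed \eqref{eqn: integral1}, and showing that exponential-map injectivity genuinely collapses the triple integral to $\Vol(N)^2$; everything else reduces to Hölder and explicit constant-chasing.
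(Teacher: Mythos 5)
Your proposal is correct and follows essentially the same route as the paper: Croke's argument combining \eqref{eqn: integral1}--\eqref{eqn: integral3} with the fiberwise bound $\Vol(\tilde U N)\geq \tilde\omega\,\alpha(n-1)\Vol(N)$, H\"older on $\langle u,N_{\pi(u)}\rangle\,du$, and the identity $\int_{U^+\partial N}\langle v,N_{\pi(v)}\rangle\,dv=\tfrac{\alpha(n)}{2\pi}\Area(\partial N)$ (your $\alpha(n-2)/(n-1)$ form agrees via the recursion you cite), yielding exactly $C_2(n)$. The only nitpick is that $\tilde l(\zeta^x(u))=\tilde l(u)-x$ is in general only an inequality $\tilde l(\zeta^x(u))\geq \tilde l(u)-x$, but since $F\geq 0$ this is precisely the direction your chain of estimates requires (and the paper makes the same simplification), so there is no gap.
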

\begin{proof}
\begin{align}
 \Vol(N)^2 &= \int_N \Vol(N) dp  \notag\\
 &\geq \int_N \int_{U_p}\int_0^{\tilde{l}(u)} F(u,t) dt du  dp \notag\\
 &= \int_{UN} \int_0^{\tilde{l}(u)} F(u,t) dt du  \notag\\
 &\geq \int_{\tilde{U}N} \int_0^{\tilde{l}(u)} F(u,t) dt du \notag\\
 &= \int_{U^+ \partial N} \int_0^{\tilde{l}(v)}
 \int_0^{\tilde{l}(\zeta^s(v))} F(\zeta^s(v),t) \langle v,
 N_{\pi(v)} \rangle dt ds dv  \notag\\
 &= \int_{U^+ \partial N} [\int_0^{\tilde{l}(v)} \int_0^{\tilde{l}(v) -s} F(\zeta^s(v), t) dtds]
         \langle v, N_{\pi(v)} \rangle dv \notag\\
 &\geq \frac{C_1(n)}{\pi^{n+1}} \int_{U^+ \partial N}
 (\tilde{l}(v))^ {n+1} \langle v, N_{\pi(v)} \rangle dv.
 \label{eqn: volume2}
\end{align}
By H\"{o}lder inequality,
\begin{align*}
  \int_{U^+\partial N} \tilde{l}(v) \langle v, N_{\pi(v)} \rangle dv
  & = \int_{U^+\partial N} (\tilde{l}(v)\langle v, N_{\pi(v)} \rangle
  ^{\frac{1}{n+1}})  \langle v, N_{\pi(v)} \rangle ^{\frac{n}{n+1}}
  dv\\
  &\leq (\int_{U^+\partial N} \tilde{l}^{n+1} \langle v, N_{\pi(v)} \rangle
  dv)^{\frac{1}{n+1}} (\int_{U^+\partial N} \langle v, N_{\pi(v)} \rangle dv)^{\frac{n}{n+1}},
\end{align*}
then,
\begin{align}
\int_{U^+\partial N} \tilde{l}^{n+1} \langle v, N_{\pi(v)} \rangle
dv \geq \frac{(\int_{U^+\partial N} \tilde{l}(v) \langle v,
N_{\pi(v)} \rangle dv)^{n+1}} {(\int_{U^+\partial N} \langle v,
N_{\pi(v)} \rangle dv)^n}. \label{eqn: holder}
\end{align}
Put inequality (\ref{eqn: holder}) into inequality (\ref{eqn:
volume2}), we get
\begin{align*}
   \Vol(N)^2 &\geq \frac{C_1(n)}{\pi^{n+1}}
   \frac{(\int_{U^+\partial N} \tilde{l}(v) \langle v,
N_{\pi(v)} \rangle dv)^{n+1}} {(\int_{U^+\partial N} \langle v,
N_{\pi(v)} \rangle dv)^n},\\
\end{align*}
therefore,
\begin{align*}
\Vol(N)^2 (\int_{U^+\partial N} \langle v, N_{\pi(v)} \rangle dv)^n
  &\geq \frac{C_1(n)}{\pi^{n+1}} \Vol(\tilde{U}M)^{n+1}\\
  &\geq \frac{C_1(n)}{\pi^{n+1}}[\tilde{\omega} \alpha(n-1) \Vol(N)]^{n+1}.
\end{align*}
Note that
\begin{align*}
  \int_{U^+\partial N} \langle v, N_{\pi(v)} \rangle dv
= \frac{\alpha(n)}{2 \pi} \Area(\partial N),
\end{align*}
consequently,
\begin{align*}
 \frac{\Area(\partial N)^n}{\Vol(N)^{n-1}}
   &\geq \frac{C_1(n)}{\pi^{n+1}} \tilde{\omega}^{n+1} \alpha(n-1)^{n+1}
   \frac{(2\pi)^n}{\alpha(n)^n}\\
   &= 2^{n-1} \frac{\alpha(n-1)^n}{\alpha(n)^{n-1}} \tilde{\omega}^{n+1} \\
   &\triangleq C_2(n) \tilde{\omega}^{n+1}.
\end{align*}
\end{proof}

\begin{lemma}
 $M$ is a complete Riemannian manifold with $Ric \geq -(n-1)K^2$. $\Omega \Subset N_1 \subset N_2 \subset M$, $\Omega$ is a domain with smooth boundary, and $\diam(N_2) \leq D$.  Then
\begin{align}
   \tilde{\omega}(\Omega) \geq \frac{\Vol(N_2) - \Vol(N_1)}{\alpha(n-1) \int_0^D (\frac{\sinh K r}{K})^{n-1} dr}.
   \label{eqn: volume1}
\end{align}
   \label{lemma: omegacontrol}
\end{lemma}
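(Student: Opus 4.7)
The plan is to bound $\tilde\omega_p$ from below by a quantity independent of $p \in \Omega$ and then take the infimum. The key geometric observation is that every point $q \in N_2 \setminus N_1$ can be reached from any fixed $p \in \Omega$ by a minimizing geodesic of length at most $D$, and the initial direction of that geodesic must necessarily lie in $\tilde U_p$.

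Fixing $p \in \Omega$, I would use the assumption $\diam(N_2) \leq D$ to extract, for each $q \in N_2 \setminus N_1$, a minimizing geodesic $\gamma_v : [0, r] \to M$ with $v = \gamma_v'(0) \in U_p$ and $r = d_M(p, q) \leq D$. Since $p \in \Omega \Subset N_1$ while $q \in N_2 \setminus N_1$, the geodesic must exit $\bar\Omega$ at some first time $l(v) \in (0, r]$; because $\gamma_v$ still minimizes at time $r \geq l(v)$, the cut distance along $v$ is at least $l(v)$, so $v \in \tilde U_p$. Parametrizing $N_2 \setminus N_1$ in polar coordinates around $p$ with Jacobian $F_p(v, s)$ and applying the Bishop volume comparison (which under $Ric \geq -(n-1)K^2$ bounds $F_p(v, s) \leq (\sinh(Ks)/K)^{n-1}$ for $s$ below the cut distance), I would obtain
\[
\Vol(N_2) - \Vol(N_1) \leq \Area(\tilde U_p) \int_0^D \left(\frac{\sinh(Ks)}{K}\right)^{n-1} ds.
\]
Dividing by $\Area(U_p) = \alpha(n-1)$ times the displayed integral gives the desired lower bound for $\tilde\omega_p$, uniform in $p$, and taking the infimum over $p \in \Omega$ finishes the lemma.

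The main delicate step will be justifying the polar-coordinate change of variables: I must verify that each $q \in N_2 \setminus N_1$ off the cut locus of $p$ is covered exactly once by some $(v, s) \in \tilde U_p \times [0, D]$, so that the displayed volume inequality is a genuine inequality of integrals rather than of signed quantities. A minor but essential preliminary check is that the hypothesis $\Omega \Subset N_1$ forces $q \notin \bar\Omega$, which is what actually guarantees that the minimizing geodesic from $p$ to $q$ crosses $\partial\Omega$ and therefore that $v$ lies in $\tilde U_p$. Once these two points are addressed, the argument reduces to a clean combination of Bishop's pointwise comparison with the geometric observation above.
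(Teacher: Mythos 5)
Your argument is correct and is essentially the paper's own proof: fix $p\in\Omega$, observe that every $q\in N_2\setminus N_1$ lies outside $\bar\Omega$ so the minimizing geodesic from $p$ to $q$ hits $\partial\Omega$ while still minimizing, hence its initial direction lies in $\tilde U_p$, then bound $\Vol(N_2\setminus N_1)$ by integrating the polar-coordinate Jacobian over $\tilde U_p\times[0,D]$ with the Bishop comparison $F(v,s)\leq(\sinh(Ks)/K)^{n-1}$, and take the infimum over $p$. The points you flag as delicate (injectivity within the cut locus and $q\notin\bar\Omega$ from $\Omega\Subset N_1$) are exactly the implicit steps in the paper's containment $N_2\setminus N_1\subset O_p\cap B(p,D)$, so nothing further is needed.
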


\begin{figure}
 \begin{center}
  \psfrag{A}[c][c]{$\Omega$}
  \psfrag{B}[c][c]{$N_1$}
  \psfrag{C}[c][c]{$N_2$}
  \psfrag{D}[c][c]{$O_p$}
  \psfrag{E}[c][c]{$P$}
  \includegraphics[width=0.8 \columnwidth]{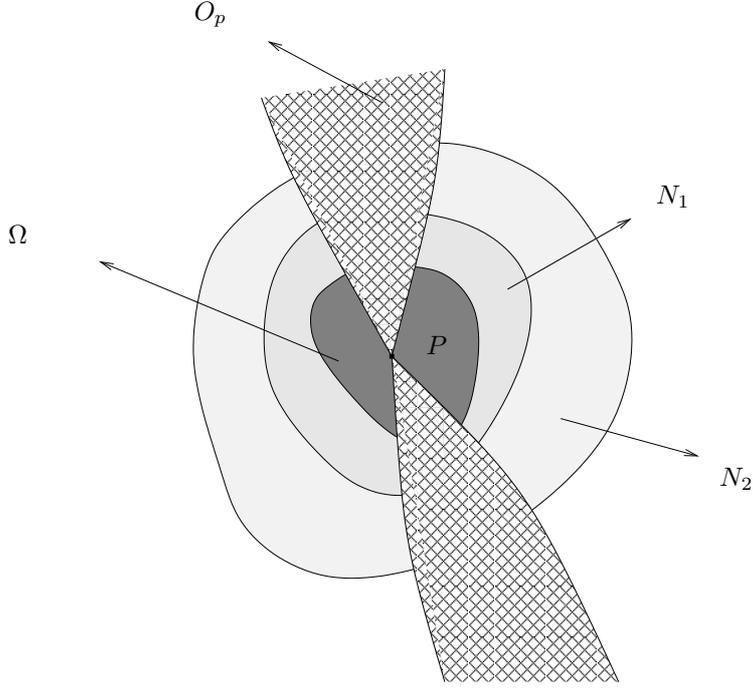}
  \caption{the relation among sets}
  \label{fig: sets}
 \end{center}
 \end{figure}

\begin{proof}
 Choose $p \in \Omega$. Then $(\Omega, \partial \Omega,g)$ is a smooth Riemannian manifold with boundary.
 We  look $(\Omega, \partial \Omega, g)$ as $(N, \partial N,g)$ in our previous argument.   Let
 \[
    O_p \triangleq \{  q \in M| q = \Exp_p {tu}, u  \in \tilde{U}_p \; , t \leq C(u)
    \},
 \]
 where $C(u)$ is the cut radius at direction $u$. Since $u \in
 \tilde{U}_p$, $\tilde{l}(u) = l(u)$. Therefore $M \setminus \Omega  \subset
 O_p$, in particular, $N_2 \setminus N_1 \subset  O_p$. And also we
 know, $N_2 \setminus N_1 \subset N_2 \subset B(p,D)$. Then
\begin{align*}
 \Vol(N_2 \setminus N_1) &\leq \Vol (O_p \cap B(p,D)) \\
        &= \int_{\tilde{U}_p} \int_0^D F(u,r)dr du\\
        &\leq \tilde{\omega}_p \alpha(n-1) \int_0^D (\frac{\sinh K r}{K})^{n-1} dr.
\end{align*}
Consequently,
\begin{align*}
   \tilde{\omega}_p \geq \frac{\Vol(N_2) - \Vol(N_1)}{\alpha(n-1) \int_0^D (\frac{\sinh K r}{K})^{n-1}
   dr}.
\end{align*}
Since $p$ is an arbitrary point in $\Omega$, we have
\begin{align*}
 \tilde{\omega} & = \inf_{p \in \Omega} \tilde{\omega}_p
 \geq \frac{\Vol(N_2) - \Vol(N_1)}{\alpha(n-1) \int_0^D (\frac{\sinh K r}{K})^{n-1}
   dr}.
\end{align*}
\end{proof}

\begin{theorem}
 Suppose $\{(M^n,g(t)), 0 \leq t \leq 1 \}, n \geq 3$ is a Ricci flow solution.
$p \in M$,  and
\begin{align*}
  & Ric(x,t) \geq -(n-1), \; \forall \; (x,t) \in  M \times [0,1];\\
  & Ric(x,t) \leq (n-1), \; \forall \; (x,t) \in B_{g(1)}(p,1) \times [0,1];\\
  & \Vol_{g(1)}(B_{g(1)}(p,1)) \geq \kappa.
\end{align*}
Let $r(\kappa)$ be the solution of  $ \int_0^{r(\kappa)} (\sinh
s)^{n-1} ds = \frac{\kappa}{2 \alpha(n-1) e^{2n(n-1)}}$. Then there
is a uniform Sobolev constant
 $\sigma(n,\kappa)$
for $B_{g(1)}(p,r(\kappa))$ on each time slice, i.e., for any $f \in
W_0^{1,2}(B_{g(1)}(p,r(\kappa)))$,
\begin{align}
   \norm{f}{\frac{2n}{n-2}, B_{g(1)}(p,r(\kappa))}^2
  \leq \sigma(n,\kappa) \norm{\nabla f}{2,B_{g(1)}(p,r(\kappa))}^2.
\label{eqn: localsobolev}
\end{align}
\label{thm: sobolev}
\end{theorem}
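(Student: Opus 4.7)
The plan is to obtain the Sobolev inequality at each time slice by first establishing a uniform lower bound on the isoperimetric constant $\Phi$ of $B_{g(1)}(p, r(\kappa))$ via Lemmas~\ref{lemma: omegacontrol} and~\ref{lemma: psicontrol}, and then upgrading through the classical Federer--Fleming equivalence. The first step is to exploit the Ricci bound $|Ric| \leq n-1$ on $B_{g(1)}(p,1) \times [0,1]$ together with $\D{g}{t} = -2Ric$ to derive pointwise metric equivalence $e^{-2(n-1)} g(1) \leq g(t) \leq e^{2(n-1)} g(1)$ on the unit ball. Three consequences follow uniformly in $t \in [0,1]$: (i) $e^{-n(n-1)} d\mu_{g(1)} \leq d\mu_{g(t)} \leq e^{n(n-1)} d\mu_{g(1)}$ on the ball; (ii) the length of any curve contained in the ball is distorted by a factor in $[e^{-(n-1)}, e^{n-1}]$ between $g(1)$ and $g(t)$; (iii) $\diam_{g(t)}(B_{g(1)}(p,1)) \leq 2 e^{n-1}$, via the join at $p$ of two $g(1)$-geodesics.

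Using (i), the noncollapsing hypothesis gives $\Vol_{g(t)}(B_{g(1)}(p, 1)) \geq e^{-n(n-1)} \kappa$. For the smaller ball, (i) combined with the Bishop inequality applied at time $1$ (using the global assumption $Ric \geq -(n-1)$) yields
\[
\Vol_{g(t)}(B_{g(1)}(p, r(\kappa))) \leq e^{n(n-1)} \alpha(n-1) \int_0^{r(\kappa)} (\sinh s)^{n-1} \, ds = \frac{\kappa}{2 e^{n(n-1)}},
\]
the last equality being the defining relation for $r(\kappa)$. Hence the $g(t)$-volume gap between $B_{g(1)}(p,1)$ and $B_{g(1)}(p, r(\kappa))$ is at least $\kappa/(2 e^{n(n-1)})$. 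Now apply Lemma~\ref{lemma: omegacontrol} at time $t$ with $\Omega = B_{g(1)}(p, r(\kappa))$ (up to a smooth approximation of its boundary), $N_1 = B_{g(1)}(p, r(\kappa) + \epsilon)$, $N_2 = B_{g(1)}(p, 1)$, $K = 1$, and $D = 2 e^{n-1}$; letting $\epsilon \to 0^+$ produces a lower bound for $\tilde{\omega}(\Omega)$ depending only on $n$ and $\kappa$. Lemma~\ref{lemma: psicontrol} then gives $\Phi(\Omega) \geq C(n,\kappa) > 0$ at every time slice.

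A uniform lower bound on $\Phi(\Omega)$ implies, by the Federer--Fleming coarea argument, the Sobolev inequality $\norm{f}{\frac{n}{n-1}, \Omega} \leq C \norm{\nabla f}{1, \Omega}$ for $f \in W_0^{1,1}(\Omega)$, which upgrades to the stated $L^{\frac{2n}{n-2}}$ inequality by applying it to $|f|^{\frac{2(n-1)}{n-2}}$ and invoking Cauchy--Schwarz; the final constant $\sigma(n,\kappa)$ depends only on $n$ and $\kappa$. The delicate point is the bookkeeping of exponents in the volume comparison: the factor $e^{2n(n-1)}$ in the defining equation of $r(\kappa)$ is tailored exactly so that the volume distortion factor $e^{n(n-1)}$ from (i) together with the noncollapsing lower bound $e^{-n(n-1)} \kappa$ leaves a definite multiplicative gap, without which Lemma~\ref{lemma: omegacontrol} would be vacuous. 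A minor technical issue is that geodesic balls need not have smooth boundary, which is routine to handle by passing to a regular value of the distance function or a smooth exhaustion.
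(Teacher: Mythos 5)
Your proposal follows essentially the paper's own route: a $g(t)$-volume gap between $B_{g(1)}(p,1)$ and $B_{g(1)}(p,r(\kappa))$ (noncollapsing plus Bishop--Gromov at time $1$, with the factor $e^{2n(n-1)}$ in the definition of $r(\kappa)$ absorbing the distortion), a $g(t)$-diameter bound $2e^{n-1}$, then Lemma~\ref{lemma: omegacontrol} and Lemma~\ref{lemma: psicontrol} to control the isoperimetric constant, and finally the Federer--Fleming upgrade from the $L^1$ inequality to the $L^{\frac{2n}{n-2}}$ one by testing with $|f|^{\frac{2(n-1)}{n-2}}$. The only cosmetic difference is that you derive the volume and length distortion from the pointwise equivalence $e^{-2(n-1)}g(1)\leq g(t)\leq e^{2(n-1)}g(1)$ on the unit ball, while the paper integrates the evolution equations for $d\mu$ and for geodesic length; the constants are the same.

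One step should be restated correctly: applying Lemma~\ref{lemma: omegacontrol} and Lemma~\ref{lemma: psicontrol} to the single domain $\Omega=B_{g(1)}(p,r(\kappa))$ bounds only the ratio $\Area(\partial\Omega)^n/\Vol(\Omega)^{n-1}$ for that one domain, not $\Phi(\Omega)=\inf_{\Omega'\Subset\Omega}\Area(\partial\Omega')^n/\Vol(\Omega')^{n-1}$, and it is the latter (ratios of arbitrary superlevel sets of $f$) that the coarea argument consumes, so as literally written the chain from ``$\tilde\omega(\Omega)$ bounded'' to ``$\Phi(\Omega)$ bounded'' does not follow. The repair is immediate and is exactly what the paper does: for every smooth domain $\Omega'\Subset B_{g(1)}(p,r(\kappa))$ apply Lemma~\ref{lemma: omegacontrol} with $N_1=B_{g(1)}(p,r(\kappa))$ and $N_2=B_{g(1)}(p,1)$ (no $\epsilon$-enlargement is needed), observing that the lower bound for $\tilde\omega(\Omega')$ depends only on $N_1$, $N_2$, $K$, $D$ and not on $\Omega'$; then Lemma~\ref{lemma: psicontrol} gives the same ratio bound for all such $\Omega'$, hence $\Phi(B_{g(1)}(p,r(\kappa)))\geq C(n,\kappa)$, and the rest of your argument goes through verbatim.
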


\begin{proof}
   Let $N_1 \triangleq B_{g(1)}(p,r(\kappa))$, $N_2 \triangleq B_{g(1)}(p,1)$. Calculating the evolution equation for volume:
\begin{align*}
    \frac{d \Vol_{g(t)}(N_2)}{d t} &=- \int_{N_2} R d\mu \\
         & \leq n(n-1) \Vol_{g(t)}(N_2),   \quad (Ric \geq -(n-1))
\end{align*}
hence,
\begin{align}
 \Vol_{g(t)}(N_2) &\geq e^{n(n-1)(t-1)} \Vol_{g(1)}(N_2) \notag \\
                  &\geq e^{-n(n-1)} \Vol_{g(1)}(N_2) \quad   (0 \leq t \leq 1) \notag \\
                  &\geq e^{-n(n-1)} \kappa. \label{eqn: volumelower}
\end{align}
Similarly, by the condition $Ric \leq (n-1)$,
\begin{align}
 \Vol_{g(t)}(N_1) &\leq e^{n(n-1)(1-t)} \Vol_{g(1)}(N_1) \notag\\
                  &\leq e^{n(n-1)} \Vol_{g(1)}(N_1) \notag\\
                  &=e^{n(n-1)} \int_{B_{g(1)}(p,r(\kappa))} d\mu \notag\\
                  &\leq e^{n(n-1)} \alpha(n-1) \int_0^{r(\kappa)} (\sinh s)^{n-1} ds \notag\\
                  &\leq \frac{ \kappa}{2} e^{-n(n-1)}. \label{eqn: volumeup}
\end{align}
Now we consider the diameter change under Ricci flow. Suppose $\{
\gamma(s), 0 \leq s \leq \rho \}$ is a normalized shortest geodesic
contained in $N_2$ at time $t$, then
\begin{align*}
 \frac{d L_{g(t)}(\gamma)}{dt} &= - \int_0^\rho  Ric(\gamma', \gamma') ds\\
            & \geq -(n-1) L_{g(t)}(\gamma).
\end{align*}
Let $D(t)$ be the diameter of $N_2$ at time $t$, we have
\begin{align*}
 \frac{d^- D(t)}{dt} & \geq -(n-1) D(t),
\end{align*}
hence,
\begin{align}
 D(t) \leq D(1) e^{(n-1)(1-t)} \leq 2 e^{(n-1)}.
 \label{eqn: diameterup}
\end{align}
Choose an arbitrary domain $\Omega \Subset N_1$ with smooth
boundary.By inequalities (\ref{eqn: volumelower}),(\ref{eqn:
volumeup}) and (\ref{eqn: diameterup}), from lemma \ref{lemma:
omegacontrol}, we know
\begin{align*}
 \tilde{\omega}_{g(t)}(\Omega)
   &\geq \frac{\Vol_{g(t)}(N_2) - \Vol_{g(t)}(N_1)}{\alpha(n-1) \int_0^{D(t)} (\sinh s)^{n-1} ds}\\
   &\geq \frac{\kappa e^{-n(n-1)}}{2 \alpha(n-1) \int_0^{2 e^{(n-1)}} (\sinh s)^{n-1} ds}\\
   &\triangleq C_3(n,\kappa).
\end{align*}
Then, from lemma~\ref{lemma: psicontrol}, we have
\begin{align*}
 \frac{\Area(\partial \Omega)^n}{\Vol(\Omega)^{n-1}}
   &\geq C_2(n){C_3(n,\kappa)}^{n+1}\\
   &\triangleq C_4(n,\kappa).
\end{align*}
Since we can approximate any domain by domains with smooth boundary,
we actually get
\begin{align}
 \Phi(N_1) = \inf_{\Omega \Subset N_1}
\frac{\Area(\partial{\Omega})^{n}}{\Vol(\Omega)^{n-1}} \geq
C_4(n,\kappa). \label{eqn: isoperimetric}
\end{align}

Accordingly, by the equivalence of isoperimetric constant and
Sobolev constant, for any $f \in W_0^{1,1}(N_1)$,
\begin{align}
   C_4(n,\kappa) (\int_{N_1} |f|^{\frac{n}{n-1}} d\mu)^{\frac{n-1}{n}}
   \leq \int_{N_1} |\nabla f |.
   \label{eqn: L1sobolev}
\end{align}
We refer the readers to~\cite{SY} for a detailed proof for the
equivalence of inequality (\ref{eqn: isoperimetric}) and inequality
(\ref{eqn: L1sobolev}).
 Let $\gamma >0$, then
\begin{align*}
  \norm{|f|^\gamma}{\frac{n}{n-1},N_1} &\leq \frac{1}{C_4}\norm{\gamma |f|^{\gamma-1} \nabla f}{1,N_1}\\
        &\leq \frac{1}{C_4} \gamma \norm{f^{\gamma-1}}{\frac{p}{p-1},N_1}
                \norm{\nabla f}{p,N_1} \; ,
\end{align*}
therefore,
\begin{align*}
  \norm{f}{\frac{n \gamma}{n-1},N_1}^\gamma \leq \frac{1}{C_4} \norm{f}{\frac{(\gamma-1)p}{p-1},N_1}^{\gamma-1} \cdot \norm{\nabla f}{p,N_1} \; .
\end{align*}
Choose $\gamma = \frac{p(n-1)}{n-p}$, we have
\begin{align*}
  \norm{f}{\frac{np}{n-p},N_1} \leq \frac{1}{C_4} \cdot \frac{p(n-1)}{n-p} \cdot \norm{\nabla f}{p,N_1} \;.
\end{align*}
In particular, choose $ p=2$, let
\begin{align*}
    \sigma(n,\kappa)= (\frac{2(n-1)}{C_4(n,\kappa)(n-2)})^2,
\end{align*}
we obtain
\begin{align*}
    \norm{f}{\frac{2n}{n-2},N_1}^2 \leq
         \sigma(n,\kappa) \norm{\nabla f}{2,N_1}^2
\end{align*}
for any $f \in W_0^{1,2}(N_1)$.
\end{proof}

After we get the local Sobolev constant control, we are able to get
some Moser iteration formula under Ricci flow.

\section{ Moser Iteration of Scalar curvature ($n \geq 3$)}
We will give a detailed construction of local Moser iteration under
Ricci flow in this section. The idea comes from the Moser iteration
in ~\cite{CT}. Let us fix notation first.
\begin{definition}
$\{ (M^n,g(t)), \; 0 \leq t \leq 1\}$ is a  closed Ricci flow
solution. Fixing $p \in M$, $r>0$, we define
\begin{align*}
 &\Omega \triangleq B_{g(1)}(p,r),
 &\Omega' \triangleq B_{g(1)}(p,\frac{r}{2}),\\
 &D \triangleq \Omega \times [0,1],
 &D' \triangleq \Omega' \times [\frac12,1].
\end{align*}
\end{definition}

  Inequality (\ref{eqn: sobolev}) is only  Sobolev inequality for
  time slices. In order to apply Moser iteration on the parabolic
  domain $D$, we need a parabolic version of Sobolev inequality.

\begin{property}
  Suppose there is a uniform Soblev constant $\sigma$  for $\Omega$ at each
time slice, $v \in C^1(D)$, and $v(\cdot,t) \in C_0^1(D),\; \forall
t\in  [0,1]$, we have
 \begin{align}
  \int_D v^{\frac{2(n+2)}{n}} d\mu dt
  \leq \sigma \max_{0 \leq t \leq 1} \norm{v(\cdot,t)}{2,\Omega}^{\frac{4}{n}}
  \int_D |\nabla v|^2  d\mu dt.   \label{eqn: Sobolev}
\end{align}
\end{property}
\begin{proof}
 By H\"{o}lder inequality and inequality (\ref{eqn: sobolev}), we have
\begin{align*}
  \int_D v^{\frac{2(n+2)}{n}} d\mu dt &= \int_0^1 dt
            \int_{\Omega} v^2 \cdot v^{\frac{4}{n}} d\mu \\
      &\leq \int_0^1 dt (\int_\Omega v^{\frac{2n}{n-2}} d\mu)^{\frac{n-2}{n}} \cdot
                   (\int_\Omega v^{\frac{4}{n} \cdot \frac{n}{2}} d\mu)^{\frac{2}{n}}  \\
      &=\int_0^1 \norm{v(\cdot,t)}{2,\Omega}^{\frac{4}{n}}
               (\int_\Omega v^{\frac{2n}{n-2}}d\mu)^{\frac{n-2}{n}} dt  \\
      &\leq \sigma \max_{0 \leq t \leq 1} \norm{v(\cdot,t)}{2,\Omega}^{\frac{4}{n}}
             \int_D |\nabla v|^2  d\mu dt.
\end{align*}
\end{proof}

Then we start the main Lemmas in this section.
\begin{lemma}
  $\{ (M^n,g(t)), \; 0 \leq t \leq 1\}$ is a closed Ricci flow solution with $Ric \geq
  -B$. Suppose there is a uniform Soblev constant $\sigma$  for $\Omega$ at each
time slice.   If $u \in C^1(D)$ and $u \geq 0$,
\begin{align}
       \D{u}{t} \leq \Lap u + fu +h,
       \label{eqn: equation1}
\end{align}
in distribution sense, and  $\norm{f}{q,D} + \norm{R_{-}}{q,D} +1
\leq C_0$ for some $q
> \frac{n}{2} +1$. Then there is  a constant $C_a=
C_a(n,q,\sigma,C_0,r,B)$ such that
\begin{align}
   \norm{u}{\infty, D'} \leq C_a (\norm{u}{\frac{n+2}{n},D} + \norm{h}{q,D} \cdot \norm{1}{\frac{n+2}{n},D}).
   \label{eqn: control0}
\end{align}
\label{lemma: iteration1}
\end{lemma}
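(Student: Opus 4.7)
The plan is to carry out a parabolic Moser iteration adapted to Ricci flow, and to reduce the forcing term $h$ to zero by a translation trick before iterating.

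First I would absorb $h$. Setting $K \triangleq \|h\|_{q,D}\cdot\|1\|_{(n+2)/n,D}$ (up to a dimensional constant) and $w \triangleq u+K$, the function $w\geq K>0$ satisfies
\[
\partial_t w \;\leq\; \Lap w + \Bigl(f+\tfrac{h}{w}\Bigr)w.
\]
Since $|h/w|\leq |h|/K$ and by construction $\|h/K\|_{q,D}$ is controlled by the same $C_0$, this reduces the inequality to the homogeneous form $\partial_t w \leq \Lap w + \tilde f\, w$ with $\|\tilde f\|_{q,D}\leq C(C_0)$. It then suffices to prove $\|w\|_{\infty,D'}\leq C\|w\|_{(n+2)/n,D}$, which gives the desired bound in the original statement.

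Next I would set up the energy estimate. Choose a standard parabolic cutoff $\phi=\psi(x)\eta(t)$ supported in $\Omega\times[\tau_1,1]$, identically one on $\Omega'\times[\tau_2,1]$, where $\tau_1<\tau_2$ will eventually shrink towards $1/2$. Multiplying by $pw^{p-1}\phi^{2}$ and using the Ricci flow evolution $\partial_t d\mu=-R\,d\mu$ gives, after integration by parts,
\begin{align*}
\sup_{t}\int w^{p}\phi^{2}\,d\mu + \int_D |\nabla(w^{p/2}\phi)|^{2}\,d\mu\,dt
\;\leq\; C(p)\!\int_D w^{p}\bigl(|\nabla\phi|^{2}+\phi|\partial_t\phi|+R_{-}\phi^{2}+|\tilde f|\phi^{2}\bigr)d\mu\,dt.
\end{align*}
The $R_-$ term arises from differentiating $d\mu$, and the coefficient $C(p)$ grows polynomially in $p$. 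Feeding this into the parabolic Sobolev inequality from the preceding Property produces a bound
\[
\|w^{p/2}\phi\|_{L^{2(n+2)/n}(D)}^{2}\;\leq\;\sigma\,[\text{RHS above}]^{1+2/n}.
\]

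The critical step is handling the $\int |\tilde f|w^{p}\phi^{2}$ and $\int R_{-}w^{p}\phi^{2}$ terms. By H\"older with exponent $q>\tfrac{n}{2}+1$, each is bounded by $\|\tilde f\|_{q}\|w^{p/2}\phi\|_{L^{2q/(q-1)}}^{2}$. Since $\tfrac{2q}{q-1}<\tfrac{2(n+2)}{n}$, interpolation between $L^{2}$ and $L^{2(n+2)/n}$ followed by Young's inequality allows us to absorb an $\varepsilon$-fraction of the Sobolev term back into the left-hand side, paying only a $C(\varepsilon,C_{0})\|w^{p/2}\phi\|_{L^{2}}^{2}$ penalty. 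This is exactly why the hypothesis $q>\tfrac{n}{2}+1$ is needed, and it is the main technical obstacle, since one must track how the absorbing constant depends on $p$.

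Finally I iterate. Setting $p_k=\tfrac{n+2}{n}\cdot(1+\tfrac{2}{n})^{k}$, choosing a nested sequence of radii $r_k\searrow r/2$ and times $\tau_k\nearrow 1/2$ so that $|\nabla\phi_k|+|\partial_t\phi_k|\leq C\,4^{k}/r^{2}$, the recursion
\[
\|w\|_{L^{p_{k+1}}(D_{k+1})} \;\leq\; \bigl(C\cdot 4^{k}\bigr)^{1/p_k}\|w\|_{L^{p_k}(D_k)}
\]
telescopes (since $\sum k/p_k<\infty$) to yield $\|w\|_{\infty,D'}\leq C_{a}\|w\|_{L^{(n+2)/n}(D)}$ with $C_a=C_a(n,q,\sigma,C_0,r,B)$; undoing the translation gives~(\ref{eqn: control0}).
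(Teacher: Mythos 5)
Your strategy is essentially the one the paper uses: shift $u$ by a constant built from $h$ to make the inequality homogeneous, derive a Caccioppoli-type estimate against the evolving measure (the $\partial_t d\mu=-R\,d\mu$ term is exactly where $R_-$ enters), feed it into the parabolic Sobolev inequality, absorb the $f$ and $R_-$ terms by H\"older plus interpolation using $q>\tfrac n2+1$, and iterate over nested parabolic domains with cutoffs whose gradients grow like $4^k$. The one step that does not work as written is the normalization of the shift. With $K=\|h\|_{q,D}\,\|1\|_{\frac{n+2}{n},D}$ you have $\|h/K\|_{q,D}=\|1\|_{\frac{n+2}{n},D}^{-1}$, which is not one of the quantities bounded by hypothesis; and, more seriously, undoing the translation at the end gives $\|u\|_{\infty,D'}\le C\bigl(\|u\|_{\frac{n+2}{n},D}+\|h\|_{q,D}\|1\|_{\frac{n+2}{n},D}^{2}\bigr)$, i.e.\ an extra factor $\|1\|_{\frac{n+2}{n},D}$ beyond (\ref{eqn: control0}). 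That factor cannot be hidden in $C_a(n,q,\sigma,C_0,r,B)$: under the hypotheses of this lemma (only $Ric\ge -B$, no upper Ricci or scalar curvature bound on $\Omega$) the spacetime volume of $D$ has no a priori upper bound, since $d\mu_{g(t)}=d\mu_{g(1)}\exp\bigl(\int_t^1 R\,ds\bigr)$ can be huge at earlier times; the volume bound the paper proves later (Property~\ref{property: tildevolume}) requires $|Ric|\le n-1$ on $\Omega$, which is not assumed here. The fix is exactly the paper's choice: take the shift $\kappa=\|h\|_{q,D}$, so that $\|h/(u+\kappa)\|_{q,D}\le 1$ — this is precisely the role of the ``$+1$'' in the hypothesis $\|f\|_{q,D}+\|R_-\|_{q,D}+1\le C_0$ — and then undoing the translation yields (\ref{eqn: control0}) with no spurious volume factor. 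One further point you should make explicit in the iteration: your spatial cutoffs are built from $d_{g(1)}$, but their gradients must be measured in $g(t)$; the comparison $|\nabla\eta|_{g(t)}\le e^{B}|\nabla\eta|_{g(1)}$ is where $Ric\ge -B$ is actually used and why $B$ appears in $C_a$ — you record the dependence but never justify it.
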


\begin{proof}
 Choose a cutoff function $\eta \in C^{\infty}(D)$  such that
$\eta(\cdot,t) \in C_0^\infty(\Omega), \; \forall t\in [0,1]$, and
$\eta(x,0) \equiv 0$. Moreover, $\eta(x,\cdot)$ is a nondecreasing
function for every $x \in \Omega$.

\begin{figure}
 \begin{center}
  \psfrag{A}[c][c]{$\Omega$}
  \psfrag{B}[c][c]{$\supp(\eta)$}
  \includegraphics[width=0.5 \columnwidth]{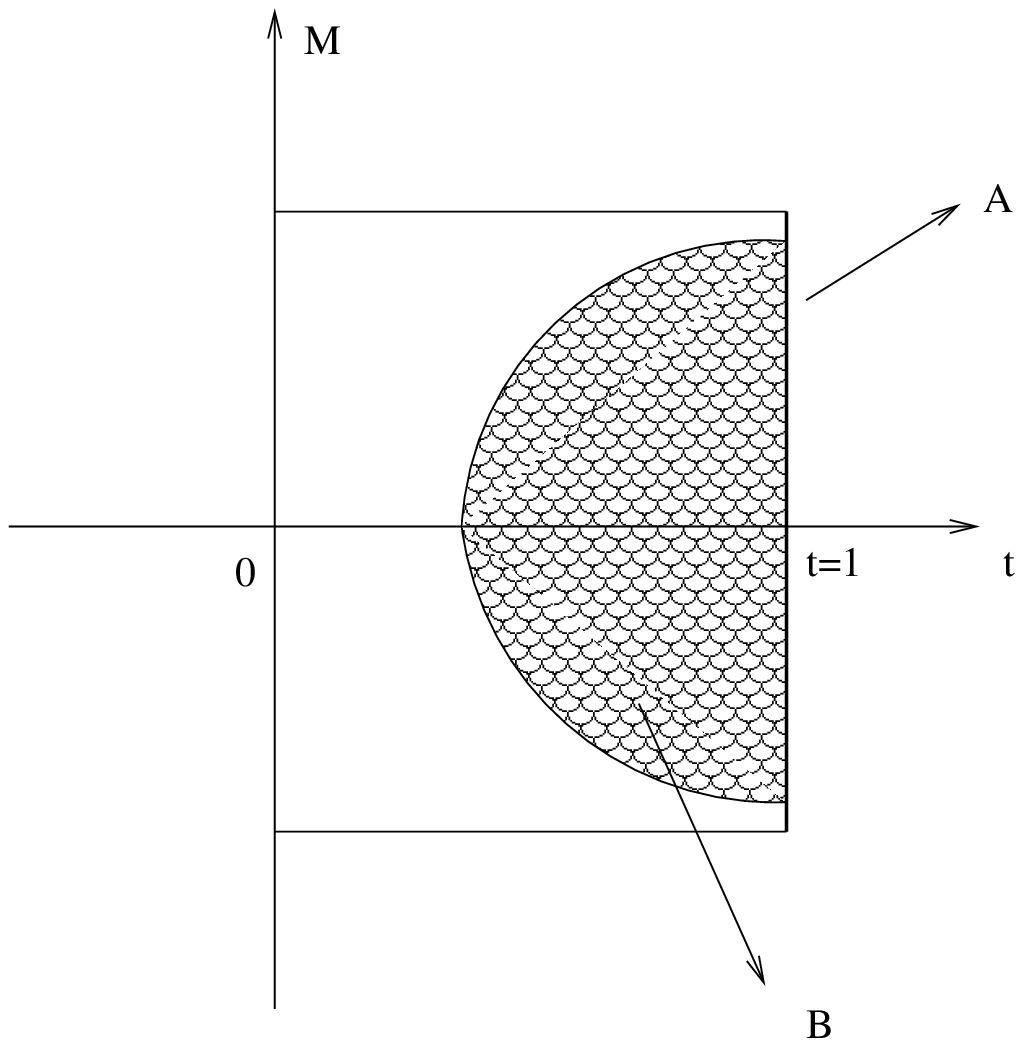}
  \caption{a cutoff function}
  \label{fig: eta}
 \end{center}
\end{figure}

 Define
 \begin{align*}
 \kappa \triangleq  \norm{h}{q,D} , \quad
  v \triangleq u+ \kappa.
 \end{align*}

 Fix $\beta >1$, use $\eta^2 (u+\kappa)^{\beta-1}$ as a test function, from inequality (\ref{eqn: equation1}),
\begin{align*}
     -\Lap v+ \D{v}{t} \leq fu +h.
\end{align*}
Then, for any $s \in (0,1]$,we have
\begin{align*}
  &\quad \int_0^s \int_\Omega (-\Lap v)\eta^2 v^{\beta-1} d\mu dt
 +\int_0^s \int_\Omega \D{v}{t} \eta^2 v^{\beta-1} d\mu dt\\
 &\leq    \int_0^s \int_\Omega (fu+h)\eta^2 (u+\kappa)^{\beta-1} d\mu
 dt\\
 &\leq \int_0^s \int_\Omega (|f|+\frac{|h|}{\kappa}) \eta^2
 v^{\beta} d\mu dt.
\end{align*}

Note that $\D{d\mu}{t} = -R d\mu$, integrating by parts yields
\begin{align}
   &\int_0^s  \int_\Omega(2 \eta <\nabla \eta, \nabla v> v^{\beta-1}
   +(\beta-1) \eta^2 v^{\beta-2}|\nabla v|^2)d\mu dt \notag \\
   &\quad +\frac{1}{\beta}(\int_\Omega \eta^2 v^\beta d\mu|_s
       - \int_0^s\int_\Omega 2 \eta \D{\eta}{t}v^\beta d\mu dt + \int_0^s  \int_\Omega \eta^2 v^\beta R d\mu dt) \notag \\
   &\leq \int_0^s \int_\Omega (|f|+\frac{|h|}{\kappa}) \eta^2
   v^{\beta} d\mu dt.
    \label{eqn: ipart}
\end{align}
By Schwartz inequality,
\begin{align}
  \int_0^s \int_\Omega 2 \eta <\nabla \eta, \nabla v> v^{\beta-1} d\mu dt
 \geq -\epsilon^2 \int_0^s \int_\Omega \eta^2v^{\beta-2} |\nabla v|^2
  - \frac{1}{\epsilon^2} \int_0^s \int_\Omega v^\beta |\nabla \eta|^2.
\label{eqn: Schwartz}
\end{align}
Plugging inequality (\ref{eqn: Schwartz}) into (\ref{eqn: ipart}),
we get
\begin{align*}
  &(\beta-1-\epsilon^2) \int_0^s \int_\Omega \eta^2 v^{\beta-2}|\nabla v|^2 d\mu dt
      + \frac{1}{\beta} \int_\Omega \eta^2 v^\beta d\mu|_s \\
  &\quad \leq \int_0^s \int_\Omega (|f|+\frac{|h|}{\kappa}) \eta^2 v^\beta d\mu dt
    + \frac{1}{\epsilon^2} \int_0^s \int_\Omega v^\beta |\nabla \eta|^2 d\mu dt\\
  &\quad \quad +\frac{1}{\beta} (\int_0^s\int_\Omega 2\eta \D{\eta}{t} v^\beta d\mu dt
                    -\int_0^s \int_\Omega \eta^2 v^\beta R d\mu dt).
\end{align*}
Let $\epsilon^2 = \frac{\beta-1}{2}$, since $|\nabla
v^{\frac{\beta}{2}}|^2= \frac{\beta^2}{4} v^{\beta-2} |\nabla v|^2$,
we know
\begin{align*}
   & 2(1-\frac 1 \beta) \int_0^s \int_\Omega \eta^2 |\nabla v^{\frac{\beta}{2}}|^2 d\mu dt
  + \int_\Omega \eta^2 v^\beta d\mu|_s \\
   & \quad \leq \beta \int_0^s \int_\Omega (|f|+\frac{|h|}{\kappa}+R_{-}) \eta^2 v^\beta d\mu
   dt\\
   &\quad \quad +\frac{2\beta}{\beta-1}\int_0^s \int_\Omega v^\beta |\nabla \eta|^2 d\mu dt
     +\int_0^s\int_\Omega 2\eta \D{\eta}{t} v^\beta d\mu dt.
\end{align*}
Since
\[
|\nabla (\eta v^{\frac{\beta}{2}})|^2
 \leq 2 \eta^2 |\nabla v^{\frac{\beta}{2}}|^2+2v^\beta|\nabla \eta|^2,
\]
we have
\begin{align*}
  & (1-\frac 1 \beta) \int_0^s \int_\Omega  |\nabla (\eta v^{\frac{\beta}{2}})|^2 d\mu dt
  + \int_\Omega \eta^2 v^\beta d\mu|_s \\
   &\quad \leq \beta \int_0^s \int_\Omega (|f|+\frac{|h|}{\kappa}+R_{-}) \eta^2 v^\beta d\mu
   dt\\
   &\quad \quad  +2(\frac{\beta}{\beta-1}+\frac{\beta-1}{\beta})\int_0^s \int_\Omega v^\beta |\nabla \eta|^2 d\mu dt
   +\int_0^s\int_\Omega 2\eta \D{\eta}{t} v^\beta d\mu dt.
\end{align*}
Therefore,
\begin{align*}
  &\int_0^s \int_\Omega  |\nabla (\eta v^{\frac{\beta}{2}})|^2 d\mu dt
  +\int_\Omega \eta^2 v^\beta d\mu|_s \\
  & \quad \leq \Lambda(\beta) (\int_0^s \int_\Omega   (|f|+\frac{|h|}{\kappa}+R_{-}) \eta^2 v^\beta d\mu dt
     +\int_0^s \int_\Omega v^\beta |\nabla \eta|^2 d\mu dt
     +\int_0^s\int_\Omega 2\eta \D{\eta}{t} v^\beta d\mu dt)\\
  & \quad \leq \Lambda(\beta) ((\int_0^s \int_\Omega (|f|+\frac{|h|}{\kappa}+R_{-})^q d\mu dt )^{\frac1q}
   (\int_0^s \int_\Omega (\eta^2 v^\beta)^{\frac{q}{q-1}} d\mu dt)^{\frac{q-1}{q}}\\
  & \quad \quad   +\int_0^s \int_\Omega v^\beta |\nabla \eta|^2 d\mu dt
     +\int_0^s \int_\Omega 2\eta \D{\eta}{t} v^\beta d\mu dt)\\
  & \quad \leq \Lambda(\beta)
   \{(\norm{f}{q,D} + \norm{R_{-}}{q,D} + 1) (\int_0^s \int_\Omega (\eta^2 v^\beta)^{\frac{q}{q-1}} d\mu
   dt)^{\frac{q-1}{q}}\\
  &\quad \quad +\int_0^s \int_\Omega v^\beta |\nabla \eta|^2 d\mu dt
     +\int_0^s \int_\Omega 2\eta \D{\eta}{t} v^\beta d\mu dt \}\\
  & \quad \leq \Lambda(\beta)
   (C_0 (\int_0^s \int_\Omega (\eta^2 v^\beta)^{\frac{q}{q-1}} d\mu dt)^{\frac{q-1}{q}}
    +\int_0^s \int_\Omega v^\beta |\nabla \eta|^2 d\mu dt
     +\int_0^s \int_\Omega 2\eta \D{\eta}{t} v^\beta d\mu dt).
\end{align*}
We can choose $\Lambda(\beta)= 6\beta$ if $\beta \geq 2$. In
particular,
\begin{align*}
  &\max_{0 \leq s \leq 1} \int_\Omega \eta^2 v^\beta d\mu|_s
     \leq  \Lambda(\beta) (\norm{(|\nabla \eta|^2+ 2 \eta \D{\eta}{t} )v^\beta }{1,D}
     + C_0 \norm{\eta^2 v^\beta}{\frac{q}{q-1},D}),\\
  &\int_D \eta^2 |\nabla v^{\frac{\beta}{2}}|^2 d\mu dt
     \leq  \Lambda(\beta) (\norm{(|\nabla \eta|^2+ 2 \eta \D{\eta}{t} )v^\beta }{1,D}
     + C_0 \norm{\eta^2 v^\beta}{\frac{q}{q-1},D}).
\end{align*}
The Sobolev inequality (\ref{eqn: Sobolev}) on the parabolic domain
$D$ yields
\begin{align}
   \norm{\eta^2 v^\beta}{\frac{n+2}{n},D} \leq
   \sigma^{\frac{n}{n+2}}
   \Lambda(\beta) (\norm{(|\nabla \eta|^2+ 2 \eta \D{\eta}{t} )v^\beta }{1,D} + C_0 \norm{\eta^2
                           v^\beta}{\frac{q}{q-1},D}).
    \label{eqn: normes}
\end{align}
Since $q > \frac{n+2}{2}$, $\frac{q}{q-1} \leq \frac{n+2}{n}$, by
interpolation inequality,
\[
   \norm{\eta^2 v^\beta}{\frac{q}{q-1},D} \leq \epsilon' \norm{\eta^2
   v^\beta}{\frac{n+2}{n},D} + (\epsilon')^{-\nu} \norm{\eta^2 v^\beta}{1,D},
\]
where $\nu = \frac{n+2}{2q-n-2}$. Therefore,
\begin{align*}
  &(1-\Lambda(\beta) \sigma^{\frac{n}{n+2}}C_0 \epsilon') \norm{\eta^2
  v^\beta}{\frac{n+2}{n},D}\\
  &\quad \quad  \leq \Lambda(\beta)\sigma^{\frac{n}{n+2}}(\norm{(|\nabla \eta|^2+ 2 \eta \D{\eta}{t} )v^\beta }{1,D} + C_0 \cdot (\epsilon')^{-\nu} \norm{\eta^2 v^\beta}{1,D}).
\end{align*}

 Let $\epsilon' =
\frac{1}{2\Lambda(\beta)\sigma^{\frac{n}{n+2}} C_0}$, we get
\begin{align*}
  \norm{\eta^2 v^\beta}{\frac{n+2}{n},D} \leq 2\Lambda(\beta)\sigma^{\frac{n}{n+2}} (\norm{(|\nabla \eta|^2+ 2 \eta \D{\eta}{t} )v^\beta }{1,D} +
  C_0 \cdot (2\Lambda(\beta)\sigma^{\frac{n}{n+2}} C_0)^\nu \norm{\eta^2 v^\beta}{1,D}).
\end{align*}
Since we can always choose $\Lambda(\beta) \geq 1$, we obtain
\begin{align}
  \norm{\eta^2 v^\beta}{\frac{n+2}{n},D} \leq C_1(n,q,\sigma,C_0)
  \Lambda(\beta)^{1+\nu} \int_D ( |\nabla \eta|^2 +2\eta \D{\eta}{t} + \eta^2) v^\beta
    d\mu dt.
  \label{eqn: preiteration}
\end{align}

\begin{figure}
 \begin{center}
  \psfrag{A}[c][c]{$D$}
  \psfrag{B}[c][c]{$D_1$}
  \psfrag{C}[c][c]{$D_2$}
  \psfrag{D}[c][c]{$D'$}
  \psfrag{E}[c][c]{$(p,0)$}
  \psfrag{F}[c][c]{$\Omega$}
  \psfrag{G}[c][c]{$\Omega_1$}
  \psfrag{H}[c][c]{$\Omega_2$}
  \psfrag{I}[c][c]{$\Omega'$}
  \includegraphics[width=0.8 \columnwidth]{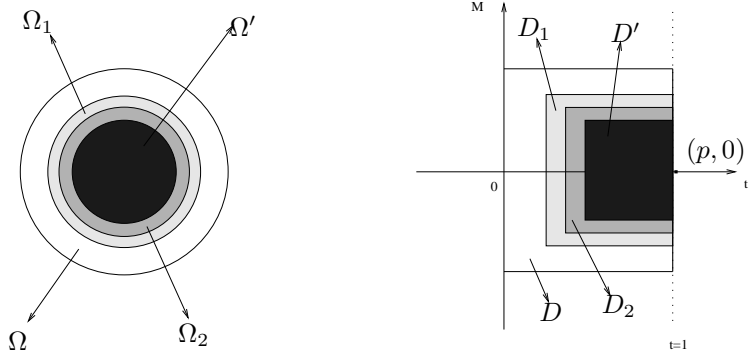}
  \caption{the sequence of domains}
  \label{fig: domains}
 \end{center}
 \end{figure}

Then we construct cutoff functions and domains. Define
\[
  t_k \triangleq \frac12 - \frac{1}{2^{k+1}},
 \quad r_k \triangleq (\frac12 + \frac{1}{2^{k+1}})r, \quad k \geq 0,
\]
\begin{align}
  \Omega_k \triangleq B_{g(1)}(p,r_k), D_k \triangleq \Omega_k\times [t_k,1], \quad k \geq 0.
  \label{eqn: domains}
\end{align}
Let $\gamma \in C^{\infty}(\R,\R)$, $0 \leq \gamma' \leq 2 $, and
\[
    \gamma(t)= \left \{
    \begin{array}{ll}
    &0, \quad  t \leq  0,\\
    &1, \quad  t \geq  1.\\
    \end{array}
    \right.
\]
Define $\gamma_k(t) \triangleq \gamma(\frac{t-t_{k-1}}{t_k
-t_{k-1}}), \quad k \geq 1 $.

Let $\rho \in C^{\infty}(\R,\R)$, $-2 \leq \rho' \leq 0$, and
\[
    \rho(s)= \left \{
    \begin{array}{ll}
    &1, \quad  s \leq  0,\\
    &0, \quad  s \geq  1.\\
    \end{array}
    \right.
\]
Define $\rho_k(s) \triangleq \rho(\frac{s-r_k}{r_{k-1}-r_k}),\quad k
\geq 1$. Then let
\[
    \eta_k(x,t) = \gamma_k(t) \rho_k(d_{g(1)}(x,p)).
\]
Therefore,  $0 \leq \eta_k \leq 1$, and
\[
  \eta_k(x,t)= \left \{
    \begin{array}{ll}
    &0, \quad  (x,t) \in  D \slash D_{k-1},\\
    &1, \quad  (x,t) \in D_k.\\
    \end{array}
    \right.
\]

\begin{figure}
 \begin{center}
  \psfrag{A}[c][c]{$y= \gamma(t)$}
  \psfrag{B}[c][c]{$y= \rho(s)$}
  \includegraphics[width=0.8 \columnwidth]{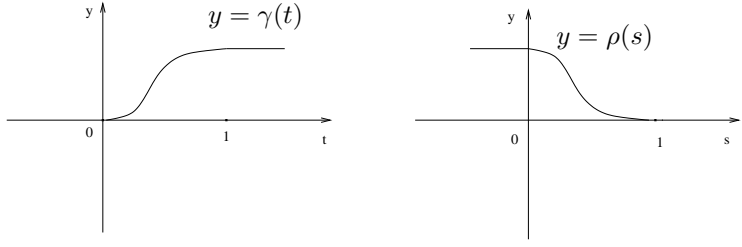}
  \caption{basic cutoff functions}
  \label{fig: basic}
 \end{center}
 \end{figure}

Moreover,
\begin{align*}
  |\D{\eta_k}{t}| = |\D{\gamma_k(t)}{t} \rho_k(r(x))|
 = |\frac{\gamma'}{t_k-t_{k-1}}\rho_k(d_{g(1)}(x,p))|
\leq 2^{k+2},
\end{align*}
\begin{align*}
 |\nabla \eta_k|_{g(1)} &= |\gamma_k(t) \nabla \rho_k(d_{g(1)}(x,p))|_{g(1)}\\
     &=|\gamma_k(t) \rho_k'(d_{g(1)}(r,p)) \nabla d_{g(1)}(x,p)|_{g(1)}\\
     &\leq |\rho_k'(d_{g(1)}(x,p))|_{g(1)}\\
     &\leq \frac{\rho'}{r_{k-1}-r_k}  \leq 2^{k+2} r^{-1}.
\end{align*}
Note that
\begin{align*}
   \frac{d}{dt} |\nabla \eta_k|_{g(t)}^2 = 2 Ric_{g(t)}(\nabla \eta_k, \nabla
   \eta_k) \geq -2B |\nabla \eta_k|_{g(t)}^2 ,
\end{align*}
hence
\begin{align*}
  |\nabla \eta_k|_{g(t)}^2 \leq e^{2B(1-t)} |\nabla
  \eta_k|_{g(1)}^2
  \leq  e^{2B}|\nabla \eta_k|_{g(1)}^2.
\end{align*}
 Therefore,we know
\begin{align*}
&|\D{\eta_k}{t}| \leq 2^{k+2},
\end{align*}
\begin{align}
|\nabla \eta_k|_{g(t)} \leq e^B 2^{k+2}r^{-1}, \quad \forall t \in
[0,1]. \label{eqn: cutoff}
\end{align}

If $\beta \geq 2$,$\Lambda(\beta)=6\beta$, by inequality (\ref{eqn:
preiteration}), we have
\begin{align*}
  \norm{v^\beta}{\frac{n+2}{n},D_k}
  &=\norm{{\eta_k}^2 v^\beta}{\frac{n+2}{n}, D_k} \\
  &\leq \norm{{\eta_k}^2 v^\beta}{\frac{n+2}{n}, D_{k-1}}\\
  &\leq C_2(n,q,\sigma,
  C_0) \beta^{1+\nu} \int_{D_{k-1}}(|\nabla \eta_k|^2 + 2 \eta_k \D{\eta_k}{t}+\eta_k^2) v^\beta d\mu dt\\
 &\leq 4^{k+2}C_3(r,B)C_2(n,q,\sigma,C_0)\beta^{1+\nu}\int_{D_{k-1}}v^\beta d\mu dt\\
 &\triangleq C_4(n,q,\sigma,C_0,r,B)\cdot 4^{k-1}\cdot \beta^{1+\nu} \norm{v^\beta}{1,D_{k-1}},
\end{align*}
consequently,
\begin{align}
  \norm{v}{\frac{n+2}{n}\beta,D_k} \leq C_4^{\frac{1}{\beta}}\cdot 4^{\frac{k-1}{\beta}}
           \cdot \beta^{\frac{1+\nu}{\beta}} \norm{v}{\beta,D_{k-1}}  \; .
  \label{eqn: iteration}
\end{align}
Let $\lambda \triangleq \frac{n+2}{n}$, then
\begin{align*}
  \norm{v}{\lambda^k,D_k}
 &\leq C_4^{\frac{1}{\lambda^{k-1}}+ \frac{1}{\lambda^{k-2}}+\cdots +\frac{1}{\lambda^{k_0}}}
  4^{\frac{k-1}{\lambda^{k-1}}+\cdots + \frac{k_0}{\lambda^{k_0}}}
  \lambda^{(1+\nu)(\frac{k-1}{\lambda^{k-1}} +\cdots + \frac{k_0}{\lambda^{k_0}})}
  \norm{v}{\lambda^{k_0},D_{k_0}}\\
 &\triangleq C_5(n,q,\sigma,C_0,r,B) \norm{v}{\lambda^{k_0},D_{k_0}}.
\end{align*}
Here $k_0=k_0(n)$ is the smallest integer such that $\lambda^{k_0}
\geq 2$.  If $\beta<2$, since (\ref{eqn: preiteration}) is  true, we
can still do iteration. Starting from $\norm{v}{\lambda,D_1}$, in
$k_0$ steps, we can get a control of
$\norm{v}{\lambda^{k_0},D_{k_0}}$. That is,
\begin{align*}
   \norm{v}{\lambda^{k_0},D_{k_0}} \leq C_6(n,q,\sigma,C_0,r,B)\norm{v}{\lambda,D_1}.
\end{align*}
Consequently,
\begin{align}
   \norm{v}{\lambda^{k}, D_k} \leq C_7(n,q,\sigma,C_0,r,B) \norm{v}{\lambda,D_1}, \quad \forall \;  k \geq 0.
   \label{eqn: control1}
\end{align}
Actually, what we get is
\begin{align}
  \norm{v}{\lambda^{k_2},D_{k_2}} \leq C_7(n,q,\sigma,C_0,r,B)
  \norm{v}{\lambda^{k_1}, D_{k_1}}, \quad \forall \; 0 \leq k_1 \leq k_2.
 \label{eqn: control2}
\end{align}
From inequality (\ref{eqn: control1}), and $D' \subset D_k, \forall
\; k \geq 0$, we get
\begin{align*}
  \norm{v}{\lambda^k,D'} \leq \norm{v}{\lambda^k,D_k} \leq C_7 \norm{v}{\lambda, D_1} \leq C_7 \norm{v}{\lambda, D}.
\end{align*}
Let $k \to \infty$, $C_a \triangleq C_7(n,q,\sigma,C_0,r,B)$, we get
\begin{align*}
\norm{v}{\infty,D'} \leq C_a(n,q,\sigma,C_0,r,B)\norm{v}{\lambda, D}
\;.
\end{align*}
Since $u \geq 0$, we have
\begin{align*}
 \norm{u}{\infty,D'} & \leq \norm{v}{\infty,D'}\\
    & \leq C_a(n,q,\sigma,C_0,r,B)\norm{v}{\lambda, D}\\
    &\leq C_a(n,q,\sigma,C_0,r,B)(\norm{u}{\lambda, D} +\kappa \norm{1}{\lambda, D})\\
    &= C_a(n,q,\sigma,C_0,r,B)(\norm{u}{\lambda,D} + \norm{h}{q,D}
    \norm{1}{\lambda,D}).
\end{align*}
\end{proof}

\begin{remark}
 From our proof, in order inequality (\ref{eqn: control2}) to be true, we only need
 $ \norm{f}{q,D_{k_1}} + \norm{R_{-}}{q,D_{k_1}} +1
\leq C_0$. Consequently, inequality (\ref{eqn: control0}) is true
for the same constant if
 $D$ is replaced by $D_k$, i.e.,
\begin{align}
   \norm{u}{\infty,D'} \leq  C_a(n,\sigma,C_0,r,B)(\norm{u}{\lambda^k,D_k} + \norm{h}{q,D}
   \norm{1}{\lambda^k,D_k}).
   \label{eqn: control0'}
\end{align}
\label{remark: gie}
\end{remark}

\begin{lemma}
  $\{ (M^n,g(t)), \; 0 \leq t \leq 1 \}$ is a closed Ricci flow solution with $Ric \geq -B$.
 There is a uniform Soblev constant $\sigma$  for $\Omega$ at each time slice.
 If $u \in C^1(D)$ and $u \geq 0$,
\begin{align*}
       \D{u}{t} \leq \Lap u + fu +h,
\end{align*}
in distribution sense.  Here $ f \in L^{\frac{n+2}{2}}(D)$. Fix
$\beta>1$.  Then there are two constants
$\;\delta_b(n,\sigma,\beta), C_b(n, \sigma,r,B,\beta)$ such that if
\; $\norm{f}{\frac{n+2}{2},D}+ \norm{R_{-}}{\frac{n+2}{2},D} \leq
\delta_b$, then
\begin{align}
 \norm{u}{\frac{n+2}{n}\beta, D_1} \leq C_b(n,\sigma,r,B,\beta)( \norm{u}{\beta,D} +  \norm{h}{\frac{n+2}{2},D}
 \norm{1}{\beta,D}) \; .
 \label{eqn: criticalcontrol}
\end{align}
Here $D_1$ is defined by equation (\ref{eqn: domains}).
\label{lemma: iteration2}
\end{lemma}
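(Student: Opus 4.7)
The plan is to follow the Moser iteration of Lemma~\ref{lemma: iteration1} with the test function $\eta^2(u+\kappa)^{\beta-1}$, where $\eta$ is a parabolic cutoff equal to $1$ on $D_1$ and vanishing near the parabolic boundary of $D$, and $\kappa>0$ is to be chosen. Setting $v:=u+\kappa$, the same integration by parts used there (exploiting $u\le v$, $|h|\le (|h|/\kappa)\,v$, and $\D{d\mu}{t}=-R\,d\mu$) produces the analogue of inequality~(\ref{eqn: normes}), namely
\[
\|\eta^2 v^\beta\|_{\frac{n+2}{n},D}\le \sigma^{\frac{n}{n+2}}\Lambda(\beta)\Bigl(\|(|\nabla\eta|^2+2\eta\,\D{\eta}{t})v^\beta\|_{1,D}+A\,\|\eta^2 v^\beta\|_{\frac{n+2}{n},D}\Bigr),
\]
where by H\"older $A\le \||f|+|h|/\kappa+R_-\|_{\frac{n+2}{2},D}$.

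The critical feature distinguishing this lemma from the subcritical case is that $q=\frac{n+2}{2}$ satisfies $\frac{q}{q-1}=\frac{n+2}{n}$, so the interpolation step used in the proof of Lemma~\ref{lemma: iteration1} is unavailable; instead one must make $A$ itself small. The key is to dilate $\kappa$: set
\[
\kappa:=\delta_b^{-1}\|h\|_{\frac{n+2}{2},D},
\]
so that $\||h|/\kappa\|_{\frac{n+2}{2},D}=\delta_b$ and hence $A\le 2\delta_b$ by the hypothesis. Since $\beta$ is fixed, $\Lambda(\beta)$ depends only on $\beta$, so we may choose $\delta_b=\delta_b(n,\sigma,\beta)$ small enough that $2\sigma^{\frac{n}{n+2}}\Lambda(\beta)\delta_b\le\tfrac12$; absorbing the $A$-term onto the left yields
\[
\|\eta^2 v^\beta\|_{\frac{n+2}{n},D}\le 4\sigma^{\frac{n}{n+2}}\Lambda(\beta)\,\|(|\nabla\eta|^2+2\eta\,\D{\eta}{t})v^\beta\|_{1,D}.
\]

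Using the cutoff bounds from the proof of Lemma~\ref{lemma: iteration1} (in particular $|\nabla\eta|_{g(t)}\le e^B\cdot O(r^{-1})$ and $|\D{\eta}{t}|\le O(1)$ thanks to $Ric\ge -B$), the right-hand side is bounded by $C(n,\sigma,r,B,\beta)\int_D v^\beta\,d\mu\,dt$; since $\eta\equiv 1$ on $D_1$ this gives $\|v\|_{\frac{n+2}{n}\beta,D_1}\le C_b^{1/\beta}\|v\|_{\beta,D}$. Writing $v=u+\kappa$ with $u\ge 0$ and using $\|v\|_{\beta,D}\le\|u\|_{\beta,D}+\kappa\|1\|_{\beta,D}$ gives the desired estimate; the factor $\delta_b^{-1}$ appearing in $\kappa$ is harmless since it depends only on $n,\sigma,\beta$ and can be absorbed into the final constant $C_b(n,\sigma,r,B,\beta)$. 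The main obstacle is exactly this choice of $\kappa$: the naive choice $\kappa=\|h\|_{\frac{n+2}{2},D}$ used in Lemma~\ref{lemma: iteration1} yields $A\ge 1$, which no amount of smallness of $\|f\|_{\frac{n+2}{2},D}+\|R_-\|_{\frac{n+2}{2},D}$ can overcome; the scaling trick $\kappa\sim \delta_b^{-1}\|h\|_{\frac{n+2}{2},D}$ is what makes the critical-exponent absorption close.
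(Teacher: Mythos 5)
Your proposal is correct and follows essentially the same route as the paper: the paper also reuses the computation of Lemma~\ref{lemma: iteration1} with cutoff $\eta_1$, replaces $\kappa=\norm{h}{q,D}$ by the rescaled choice $\kappa=l\,\norm{h}{\frac{n+2}{2},D}$ with $l=4\sigma^{\frac{n}{n+2}}\Lambda(\beta)+1$ (your $\kappa=\delta_b^{-1}\norm{h}{\frac{n+2}{2},D}$ is the same device up to constants), and absorbs the critical term $\bigl(\norm{f}{\frac{n+2}{2},D}+\norm{R_-}{\frac{n+2}{2},D}+\tfrac{1}{l}\bigr)\norm{\eta_1^2v^\beta}{\frac{n+2}{n},D}$ into the left under the smallness hypothesis $\delta_b=\frac{1}{4\sigma^{\frac{n}{n+2}}\Lambda(\beta)}$, exactly as you do. Your identification of the failure of interpolation at the critical exponent and of the rescaled $\kappa$ as the key point matches the paper's argument.
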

\begin{proof}
  Let $\eta= \eta_1$, then we do the calculation as in the proof of lemma~\ref{lemma: iteration1}.
  Instead of $\kappa = \norm{h}{q,D}$ in the previous lemma, we let $\kappa = l \cdot
  \norm{h}{\frac{n+2}{2},D}$ for some positive number $l$.
  We can get a similar inequality as inequality (\ref{eqn: normes}),
\begin{align*}
   \norm{\eta_1^2 v^\beta}{\frac{n+2}{n},D} &\leq
   \sigma^{\frac{n}{n+2}} \Lambda(\beta) \{ \int_D (|\nabla \eta_1|^2+ 2 \eta_1 \D{\eta_1}{t} )v^\beta d\mu dt +\\
   & \quad \quad (\norm{f}{\frac{n+2}{2},D} +
\norm{R_{-}}{\frac{n+2}{2},D} +\frac{1}{l})\norm{\eta_1^2
v^\beta}{\frac{n+2}{n},D} \}.
\end{align*}
 If $\norm{f}{\frac{n+2}{2},D}+\norm{R_{-}}{\frac{n+2}{2},D} \leq \frac{1}{4\sigma^{\frac{n}{n+2}}
\Lambda(\beta)}$, choose $l=4\sigma^{\frac{n}{n+2}}
\Lambda(\beta)+1$,
 we obtain
\begin{align*}
  \norm{\eta_1^2 v^\beta}{\frac{n+2}{n},D}
&\leq  2\sigma^{\frac{n}{n+2}} \Lambda(\beta) \int_D (|\nabla
\eta_1|^2+ 2
\eta_1 \D{\eta_1}{t}) v^\beta d\mu dt\\
&\leq  2\sigma^{\frac{n}{n+2}} \Lambda(\beta)
C_8(r,B)\norm{v^\beta}{1,D}\; .
\end{align*}
Consequently,
\begin{align*}
 \norm{v}{\frac{n+2}{n}\beta, D_1}^\beta &=\norm{v^\beta}{\frac{n+2}{n}, D_1}\\
         &\leq \norm{\eta_1^2 v^\beta}{\frac{n+2}{n},D} \\
         &\leq 2\sigma^{\frac{n}{n+2}} \Lambda(\beta) C_8(r,B) \norm{v^\beta}{1,D}\\
         &=2\sigma^{\frac{n}{n+2}} \Lambda(\beta) C_8(r,B) \norm{v}{\beta,D}^\beta \; .
\end{align*}
Let $C_9 (n,\sigma,r,B,\beta)\triangleq (2\sigma^{\frac{n}{n+2}}
\Lambda(\beta) C_8(r,B))^{\frac{1}{\beta}}$, we get
\[
 \norm{v}{\frac{n+2}{n}\beta, D_1} \leq C_9(n,\sigma,r,B,\beta) \norm{v}{\beta,D} \; .
\]
Since $v=u + \kappa,\; u\geq 0$,
\begin{align*}
 \norm{u}{\frac{n+2}{n}\beta, D_1} &\leq \norm{v}{\beta \cdot \frac{n+2}{n},
 D_1}\\
 &\leq  C_9(n,\sigma,r,B, \beta) \norm{v}{\beta,D}\\
 &\leq  C_9(n,\sigma,r,B, \beta)( \norm{u}{\beta,D} +
 \norm{\kappa}{\beta,D})\\
 &=  C_9(n,\sigma,r,B,\beta)( \norm{u}{\beta,D} + l \cdot \norm{h}{\frac{n+2}{2},D}
 \norm{1}{\beta,D})\\
 &\leq  C_b(n,\sigma,r,B,\beta)( \norm{u}{\beta,D} +  \norm{h}{\frac{n+2}{2},D}
 \norm{1}{\beta,D}).
\end{align*}

Therefore, we finish the proof if we choose
\begin{align*}
   \delta_b(n,\sigma,\beta) &=\frac{1}{4\sigma^{\frac{n}{n+2}} \Lambda(\beta)},\\
   C_b(n,\sigma,r,B,\beta) &=C_9(n,\sigma,r,B,\beta)
     \cdot  (4\sigma^{\frac{n}{n+2}} \Lambda(\beta)+1) .
\end{align*}
\end{proof}

Before we use Moser iteration for $R$, we need some volume control.

\begin{property}
$\{ (M^n,g(t)), \; 0 \leq t \leq 1 \}$ is a closed Ricci flow
solution.
\begin{align*}
  |Ric(x,t)|  \leq (n-1) , \quad \forall (x,t) \in \Omega \times
  [0,1].
\end{align*}
Then there exists a constant $\tilde{V}(n,r) \geq 1$ such that
\begin{align}
   \norm{1}{q, D} \leq \tilde{V}^{\frac{1}{q}} \leq \tilde{V}, \quad
   \forall \; q \geq 1.
   \label{eqn: tildevolume}
\end{align}
\label{property: tildevolume}
\end{property}

\begin{proof}
 Since $\Omega = B_{g(1)}(p,r)$,  $Ric \leq (n-1)$, by the
 evolution of geodesic length under Ricci flow, we have
\begin{align*}
 \Omega \subset B_{g(t)}(p, e^{(n-1)}r), \quad \forall t \in [0,1].
\end{align*}
On the other hand, $Ric \geq -(n-1)$, by volume comparison theorem,
we obtain
\begin{align*}
  \int_{B_{g(t)}(p,e^{(n-1)}r)} d\mu \leq
   \alpha(n-1) \int_0^{e^{(n-1)}r} (\sinh r)^{n-1}dr \triangleq C_{10}(n,r),
\end{align*}
where $\alpha(n-1)$ is the area of $S^{n-1}$ with canonical metric.
Hence,
\begin{align*}
  \norm{1}{1,D} &= \int_0^1 \int_\Omega d\mu dt\\
             &\leq \int_0^1 \int_{B_{g(t)}(p,e^{(n-1)}r)} d\mu dt \\
             &\leq C_{10}.
\end{align*}
Let $\tilde{V}(n,r) \triangleq \max \{C_{10}, 1 \}$, then
\begin{align*}
  \norm{1}{q,D} = \norm{1}{1,D}^{\frac{1}{q}} \leq
  \tilde{V}^{\frac{1}{q}} \leq \tilde{V}, \quad \forall \;q \geq 1.
\end{align*}
\end{proof}

 Now we can apply Moser iteration to $R$.

\begin{theorem}
$\{ (M^n,g(t)), \; 0 \leq t \leq 1 \}$ is a closed Ricci flow
solution. Suppose
\begin{align*}
  Ric(x,t) & \geq -B, \quad \forall (x,t) \in M \times [0,1], \quad  0 \leq B \leq 1,\\
  Ric(x,t) & \leq (n-1) , \quad \forall (x,t) \in \Omega \times
  [0,1].
\end{align*}
There is a uniform Soblev constant $\sigma$  for $\Omega$ at each
time slice. Then there are constants $\delta(n,\sigma,r),
C(n,\sigma,r)$ such that if $\norm{R}{\frac{n+2}{2},D}+B \leq
\delta$, then
\begin{align}
     \norm{R_{+}}{\infty,D'} \leq C(\norm{R}{\frac{n+2}{2},D}+B) \;.
\end{align}
\label{thm: scalarestimate}
\end{theorem}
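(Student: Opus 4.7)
The plan is to apply Property 2.1 and run a two-stage Moser bootstrap. Setting $\hat R := R + nB \geq 0$, Property 2.1 gives
\begin{align*}
\D{\hat R}{t} \leq \Lap \hat R + f\hat R + h, \qquad f := 2(\hat R - 2B), \qquad h := 2nB^2.
\end{align*}
Since $R_{+} \leq \hat R$, it suffices to bound $\norm{\hat R}{\infty, D'}$ linearly in $\norm{R}{(n+2)/2, D} + B$. The inequality has the form handled by Lemmas~5.1 and~5.2, but the coefficient $f$ contains $\hat R$ itself and is therefore only in the critical space $L^{(n+2)/2}(D)$ --- one notch below what Lemma~5.1 demands. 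I will accordingly apply Lemma~5.2 once to raise integrability strictly above critical, then feed the result into Lemma~5.1 to reach $L^\infty$.

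Stage 1. Property~5.3 gives $\norm{1}{q,D} \leq \tilde V(n,r)$ uniformly in $q \geq 1$; combined with $Ric \geq -B$ and the definitions of $f, h$ this yields
\begin{align*}
\norm{\hat R}{(n+2)/2, D} + \norm{f}{(n+2)/2, D} + \norm{R_{-}}{(n+2)/2, D} \leq C_0(n, r)\bigl(\norm{R}{(n+2)/2, D} + B\bigr).
\end{align*}
Shrinking the threshold $\delta$ so this quantity is below $\delta_b(n, \sigma, (n+2)/2)$, Lemma~5.2 with $\beta = (n+2)/2$ delivers
\begin{align*}
\norm{\hat R}{q_1, D_1} \leq C_1(n, \sigma, r)\bigl(\norm{R}{(n+2)/2, D} + B\bigr), \qquad q_1 := \frac{(n+2)^2}{2n} > \frac{n+2}{2}.
\end{align*}

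Stage 2. On $D_1$ the coefficient $f$ now lies in the supercritical space $L^{q_1}$ with norm controlled by Stage~1, and $\norm{R_{-}}{q_1, D_1} \leq nB\,\tilde V^{1/q_1}$ is small. Running the Moser iteration of Lemma~5.1 on the pair $(D_1, D')$ --- which requires rescaling the cutoffs $\gamma_k, \rho_k$ of that proof so the outer cylinder becomes $D_1$ rather than $D$ --- produces
\begin{align*}
\norm{\hat R}{\infty, D'} \leq C_2(n, \sigma, r)\bigl(\norm{\hat R}{\lambda, D_1} + \norm{h}{q_1, D_1}\norm{1}{\lambda, D_1}\bigr),
\end{align*}
with $\lambda = (n+2)/n$; the bound $B \leq 1$ absorbs $B$-dependence into $C_2$. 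Since $\norm{\hat R}{\lambda, D_1} \leq \tilde V^{1/\lambda - 2/(n+2)} \norm{\hat R}{(n+2)/2, D_1}$ and $\norm{h}{q_1, D_1} \leq 2nB^2\,\tilde V^{1/q_1}$, the right-hand side is linear in $\norm{R}{(n+2)/2, D} + B$ by Stage~1, and $R_{+} \leq \hat R$ closes the argument.

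The main technical obstacle is the domain bookkeeping in Stage~2: Lemma~5.1 is stated for the specific nesting $(D, D') = (\Omega \times [0,1], \Omega' \times [1/2, 1])$, whereas my plan calls for the same iteration between $D_1$ and $D'$, which sit in a different ratio. The underlying parabolic Moser argument transfers immediately with cutoffs adapted to the new nesting, but one must verify that the resulting constant still depends only on $n, \sigma, r$ (given $B \leq 1$) and that a single threshold $\delta(n, \sigma, r)$ dominates both the Stage~1 smallness $\delta_b$ and the Stage~2 absorption of $B$-lower-order terms.
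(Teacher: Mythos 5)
Your proposal is correct and is essentially the paper's own proof: the same decomposition $\hat{R}=R+nB$, $f=2(\hat{R}-2B)$, $h=2nB^2$, one application of Lemma~\ref{lemma: iteration2} at $\beta=\frac{n+2}{2}$ (with $\delta$ shrunk so the critical norms fall below $\delta_b$) to reach $L^{\frac{(n+2)^2}{2n}}$ on $D_1$, and then the Moser iteration of Lemma~\ref{lemma: iteration1} run from $D_1$ down to $D'$. The domain bookkeeping you flag as the main obstacle is exactly what Remark~\ref{remark: gie} settles: $D_1$ is already one of the nested cylinders $D_k$ used inside the proof of Lemma~\ref{lemma: iteration1} and $D'\subset D_k$ for every $k$, so the iteration can simply be started at $D_1$ with the original cutoffs (no rescaling needed), and the resulting constant depends only on $n,\sigma,r$ once $B\leq 1$ is used.
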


\begin{proof}
   Since  $Ric \geq -B$, define $\hat{R} \triangleq R+nB$, we get
   inequality (\ref{eqn: rhat}),
  \begin{align*}
   \D{\hat{R}}{t} \leq \Lap \hat{R} +2(\hat{R} -2B )\hat{R}
   +2nB^2.
  \end{align*}

Because  $0 \leq B \leq 1$, in $D= \Omega \times [0,1]$,
 $|Ric| \leq (n-1)$, by Property~\ref{property: tildevolume},
\begin{align*}
  \norm{1}{q,D} = \norm{1}{1,D}^{\frac{1}{q}} \leq
  \tilde{V}^{\frac{1}{q}} \leq \tilde{V}, \quad \forall \;q \geq 1.
\end{align*}

  Let $u=\hat{R}, f=2(\hat{R}-2B), h=2nB^2$. As in lemma ~\ref{lemma:
  iteration2}, let
 \begin{align*}
   \beta &= \frac{n+2}{2}; \\
   \delta_b &=\delta_b(n,\sigma,\beta);  \\
   C_b &=  C_b(n,\sigma,r,1,\beta).
 \end{align*}

  If $\norm{R}{\frac{n+2}{2},D}+B$ is very small,
  say,
\begin{align*}
   \norm{R}{\frac{n+2}{2},D}+B \leq \delta(n,\sigma,r) \triangleq \frac{\delta_b}{3n \tilde{V}},
\end{align*}
 then
 \begin{align*}
   &\quad \norm{2(\hat{R}-2B)}{\frac{n+2}{2},D} + \norm{R_{-}}{\frac{n+2}{2},
   D}\\
   & = \norm{2(R+(n-2)B)}{\frac{n+2}{2},D}+\norm{R_{-}}{\frac{n+2}{2},D} \\
   &\leq 3\norm{R}{\frac{n+2}{2},D} +2(n-2)B \norm{1}{\frac{n+2}{2},D}\\
   &\leq 3n\tilde{V}^{\frac{2}{n+2}} (\norm{R}{\frac{n+2}{2},D}
   +B)\\
   &\leq \frac{\delta_b}{\tilde{V}^{\frac{n}{n+2}}} \\
   &\leq \delta_b,
 \end{align*}
  hence, by lemma~\ref{lemma: iteration2},
\begin{align}
  \norm{\hat{R}}{\frac{n+2}{n} \frac{n+2}{2},D_1} &\leq  C_b (\norm{\hat{R}}{\frac{n+2}{2}, D}
  +     2nB^2 \norm{1}{\frac{n+2}{2},D}^2 ) \notag \\
    & \leq C_b(\norm{R}{\frac{n+2}{2},D}+nB \norm{1}{\frac{n+2}{2},D} +  2nB^2 \norm{1}{\frac{n+2}{2},D}^2) \notag\\
    & \leq C_b(\norm{R}{\frac{n+2}{2},D} +3nB
    \tilde{V}^{\frac{4}{n+2}}) \notag \\
    & \leq C_b 3n \tilde{V}^{\frac{4}{n+2}}
    (\norm{R}{\frac{n+2}{2},D}+B) \label{eqn: control3}\\
    &\leq C_b \delta_b.
  \label{eqn: control4}
\end{align}
  Now let $ q=\frac{n+2}{n}\frac{n+2}{2} >\frac{n+2}{2}$,
   then from inequality (\ref{eqn: control4}),
\begin{align*}
 \norm{2(\hat{R}-2B)}{q,D_1} +\norm{R_{-}}{q,D_1} +1
  &\leq 3\norm{\hat{R}}{q,D_1} +(n+4)B \norm{1}{q,D_1} +1\\
  &\leq 3C_b \delta_b +(n+4)B \tilde{V}^{\frac{1}{q}}+1\\
  &\leq 3C_b \delta_b + \delta_b+1.
\end{align*}
Note that $0 \leq B \leq 1$, by the definition of $C_a$ in
Lemma~\ref{lemma: iteration1}, we get
\begin{align*}
C_a(n,\frac{(n+2)^2}{2n},(3C_b+1)\delta_b+1, \sigma,r,B) \leq
C_a(n,\frac{(n+2)^2}{2n},(3C_b+1)\delta_b+1, \sigma,r,1).
\end{align*}
Let $C_a = C_a(n,\frac{(n+2)^2}{2n},(3C_b+1)\delta_b+1,
\sigma,r,1)$.

  From  Remark~\ref{remark: gie}, we have
\begin{align}
  \norm{\hat{R}}{\infty,D'} &\leq  C_a(n,\frac{(n+2)^2}{2n},(3C_b+1)\delta_b+1, \sigma,
  r,B) (\norm{\hat{R}}{\frac{n+2}{n},D_1} +\norm{h}{q,D} \norm{1}{\frac{n+2}{n},
  D_1}) \notag \\
  &\qquad \qquad \qquad \qquad \qquad \qquad \qquad \qquad \qquad \qquad \textrm{by  H\"{o}lder
  inequality} \notag \\
  &\leq C_a (\norm{\hat{R}}{\frac{(n+2)^2}{2n},D_1} \norm{1}{\frac{n+2}{n},D_1}
  +\norm{2nB^2}{q,D} \norm{1}{\frac{n+2}{n},D_1}) \notag \\
  & \qquad \qquad \qquad \qquad \qquad \qquad \qquad \qquad \qquad \qquad \textrm{from inequality (\ref{eqn:
  control3})} \notag \\
  &\leq C_a ( 3nC_b \tilde{V}^{\frac{4}{n+2}} (\norm{R}{\frac{n+2}{2},D}+B) \norm{1}{\frac{n+2}{n},D_1}
       +2nB^2 \norm{1}{\frac{(n+2)^2}{2n},D}
       \norm{1}{\frac{n+2}{n},D_1}) \notag \\
  &\qquad \qquad \qquad \qquad \qquad \qquad \qquad \qquad \qquad \qquad  \textrm{since}  \; \tilde{V} \geq 1 \notag \\
  &\leq C_a \tilde{V}^{\frac{n+4}{n+2}} (3nC_b (\norm{R}{\frac{n+2}{2},D}+B) +  2nB^2) \notag \\
  &\qquad \qquad \qquad \qquad \qquad \qquad \qquad \qquad \qquad \qquad  \textrm{note that}  \; B^2 \leq B  \notag \\
  &\leq 3n(C_b+1)C_a \tilde{V}^{\frac{n+4}{n+2}} (\norm{R}{\frac{n+2}{2},D}+B).
  \label{eqn: control5}
\end{align}

Note that $\norm{R_{+}}{\infty,D'} \leq \norm{\hat{R}}{\infty,D'}$.
Let $C(n,\sigma,r) \triangleq
3n(C_b+1)C_a\tilde{V}^{\frac{n+4}{n+2}} $,
 from inequality (\ref{eqn: control5}), we have
\begin{align*}
  \norm{R_{+}}{\infty,D'} \leq C(n,\sigma,r)
  (\norm{R}{\frac{n+2}{2},D}+B).
\end{align*}
\end{proof}

\section{ Proof of Theorem~\ref{theorem: 1} for $n \geq 3$}
\begin{proof}
Since $\norm{R}{\alpha, M \times [0,T)} < \infty $ implies
$\norm{R}{\frac{n+2}{2}, M \times [0,T)} < \infty$ if $\alpha >
\frac{n+2}{2}$, so we only need to prove Theorem~\ref{theorem: 1}
for $\alpha = \frac{n+2}{2}$. We shall argue by contradiction.

Suppose the flow cannot be extended, then $|Ric|$ is unbounded by
Sesum's result. Since $Ric \geq -A$, we know
\begin{align*}
  \sup_{(x,t) \in M \times [0,T)} R(x,t) = \infty.
\end{align*}
Therefore, there exists a sequence $(x^{(i)},t^{(i)})$ such that
$\lim_{i \to \infty} t^{(i)} =T$, and
\begin{align*}
   R(x^{(i)},t^{(i)}) = \max_{(x,t) \in M \times [0,t^{(i)}]}
   R(x,t).
\end{align*}
Consequently, $\lim_{i \to \infty} R(x^{(i)},t^{(i)}) = \infty$.
Define
\begin{align*}
Q^{(i)} \triangleq R(x^{(i)},t^{(i)}), \quad P^{(i)}
  \triangleq B_{g(t^{(i)})}(x^{(i)}, (Q^{(i)})^{-\frac12}) \times [t^{(i)}- (Q^{(i)})^{-1}, t^{(i)}],
\end{align*}
then for any $(x,t) \in P^{(i)}$,   $R(x,t) \leq  Q^{(i)}$.

Now, let $g^{(i)}(t) \triangleq Q^{(i)}
g((Q^{(i)})^{-1}(t-1)+t^{(i)})$. We have a sequence of Ricci flow
solutions: $\{ (M^n,g^{(i)}(t)), \; 0 \leq t \leq 1 \}$. Moreover,
\begin{align}
 & R^{(i)}(x,t) \leq 1, \quad \forall \; (x,t)
\in B_{g^{(i)}(1)}(x^{(i)}, 1) \times [0,1]; \notag\\
 & Ric^{(i)}(x,t) \geq -\frac{A}{Q^{(i)}},  \quad  \forall \; (x,t) \in M \times [0,1].
\label{eqn: riccircondition}
\end{align}
Since $Ric^{(i)}+ \frac{A}{Q^{(i)}} \geq 0$, so
\begin{align*}
Ric^{(i)}+ \frac{A}{Q^{(i)}} \leq tr(Ric^{(i)}+ \frac{A}{Q^{(i)}}) =
R^{(i)}+\frac{nA}{Q^{(i)}}.
\end{align*}
Consequently, $Ric^{(i)} \leq R^{(i)}+\frac{(n-1)A}{Q^{(i)}}$. Note
that $\lim_{i \to \infty} \frac{A}{Q^{(i)}} =0, \; n \geq 3$, by
inequalities (\ref{eqn: riccircondition}), we get
\begin{align}
 & Ric^{(i)}(x,t) \leq n-1, \quad \forall \; (x,t)
\in B_{g^{(i)}(1)}(x^{(i)}, 1) \times [0,1]; \notag\\
 & Ric^{(i)}(x,t) \geq -\frac{A}{Q^{(i)}},  \quad  \forall \; (x,t) \in M \times [0,1].
\label{eqn: riccicondition}
\end{align}

Since for any $x \in B_{g(t^{(i)})}(x^{(i)}, (Q^{(i)})^{-\frac12})$,
$-nA \leq R(x,t) \leq  Q^{(i)}$, for large $i$, we have
 $ |R(x,t)| \leq  Q^{(i)}$. By Theorem~\ref{theorem: noncollapsing2},
there exists a $\kappa$ such that
\begin{align}
    \Vol_{g^{(i)}(1)}(B_{g^{(i)}(1)}(x^{(i)},1))
 = \frac{\Vol_{g(t^{(i)})} (B_{g(t^{(i)}}(x^{(i)}, (Q^{(i)})^{-\frac12}))}
    {(Q^{(i)})^{-\frac{n}{2}}}
 \geq \kappa.
\label{eqn: noncollapsing1}
\end{align}
From inequalities (\ref{eqn: riccicondition}) and (\ref{eqn:
noncollapsing1}),
 we are able to use Theorem~\ref{thm: sobolev}. Therefore, we get a constant $r(\kappa, n)$
  such that for large $i$, on the geodesic ball $B_{g^{(i)}(1)}(p,r)$ ,
   there is a uniform Sobolev constant $\sigma(n,r)$ for every time slice $t \in [0,1]$.

Then we collect conditions to use Theorem~\ref{thm: scalarestimate}.
Define
\begin{align*}
 &\Omega^{(i)} \triangleq B_{g^{(i)}(1)}(p,r),
 &{\Omega^{(i)}}' \triangleq B_{g^{(i)}(1)}(p,\frac{r}{2}),\\
 &D^{(i)} \triangleq \Omega^{(i)} \times [0,1],
 &{D^{(i)}}' \triangleq {\Omega^{(i)}}' \times [\frac12,1].
\end{align*}

 Since $\int_0^T \int_M |R|^{\frac{n+2}{2}} d\mu dt$ is a scale invariant,
\begin{align*}
  \lim_{i \to \infty} \; \norm{R^{(i)}}{\frac{n+2}{2},D^{(i)}}+ \frac{A}{Q^{(i)}}
&= \lim_{i \to \infty} \; \norm{R^{(i)}}{\frac{n+2}{2},D^{(i)}}\\
&= \lim_{i \to \infty} \; \int_{t^{(i)}-(Q^{(i)})^{-1}}^{t^{(i)}}
   \int_{B_{g(t^{(i)})}(p,r(Q^{(i)})^{-\frac12})} |R|^{\frac{n+2}{2}} d\mu dt\\
&\leq \lim_{i \to \infty} \; \int_{t^{(i)}-(Q^{(i)})^{-1}}^{t^{(i)}}
   \int_M |R|^{\frac{n+2}{2}} d\mu dt\\
&= 0.
\end{align*}
The last step comes from $\int_0^T \int_M |R|^{\frac{n+2}{2}} d\mu
dt < \infty$ and $\lim_{i \to \infty} (Q^{(i)})^{-1}=0$.
Consequently, for large $i$, $\norm{R^{(i)}}{\frac{n+2}{2},D^{(i)}}+
\frac{A}{Q^{(i)}} \leq \delta(n,\sigma,r)$. From Theorem~\ref{thm:
scalarestimate}, we know
\begin{align}
     \norm{R_{+}^{(i)}}{\infty,D'} \leq C(n,\sigma,r)(\norm{R^{(i)}}{\frac{n+2}{2},D^{(i)}}
             +\frac{A}{Q^{(i)}}) \;.
\label{eqn: scalarlimit}
\end{align}
Taking limit on both sides, we get
\begin{align*}
  \lim_{i \to \infty} \norm{R_{+}^{(i)}}{\infty,D'}
  \leq  \lim_{i \to \infty}  C(n,\sigma,r)(\norm{R^{(i)}}{\frac{n+2}{2},D^{(i)}}
             +\frac{A}{Q^{(i)}})=0.
\end{align*}
On the other hand,
\begin{align*}
 \lim_{i \to \infty} \norm{R_{+}^{(i)}}{\infty,D'}
 \geq  \lim_{i \to \infty} R_{+}^{(i)}(x^{(i)},1) = 1.
\end{align*}
Therefore we get a contradiction.
\end{proof}

\begin{remark}
   From the proof, we know the condition $Ric \geq -A$ is used only
   to assure that after blowup, Ricci curvature becomes almost
   nonnegative. However, when $\dim =3$, this can be achieved automatically. Actually,
    by Hamilton-Ivey's pinch[ cf.\cite{Ha5}, Theorem4.1],
\begin{align*}
    R \geq |\nu| (\log |\nu| + \log (1+t) -3).
\end{align*}
Here $\nu(x,t)$ is the smallest eigenvalue of the curvature operator
and we have normalized the initial metric such that $inf_{x \in M}
\nu(x, 0) \geq -1$. This tells us that Ricci curvature must be
nonnegative after blowup. Therefore, we can get the
   following Corollary.
\end{remark}

\begin{corollary}
$\{ (M^3,g(t)), 0 \leq t <T < \infty \}$ is a closed Ricci flow
solution.
 If $\norm{R}{\alpha,\; M \times [0,T)} < \infty, \;
 \alpha \geq \frac{5}{2}$,
 then this flow can be extended over time $T$.
\label{corollary: 1}
\end{corollary}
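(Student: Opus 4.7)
The plan is to reduce to Theorem~\ref{theorem: 1} in dimension three by using the Hamilton--Ivey pinching estimate as a substitute for the hypothesis $Ric \geq -A$. I would argue by contradiction: assuming $T$ is the maximal existence time, Sesum's theorem forces $|Ric|$ to be unbounded on $M \times [0,T)$. After normalizing the initial metric so that the smallest eigenvalue $\nu$ of the curvature operator satisfies $\nu(\cdot,0) \geq -1$, Hamilton--Ivey gives $R \geq |\nu|(\log|\nu| + \log(1+t) - 3)$ wherever $\nu < 0$. A bounded-$R$ hypothesis on $[0,T)$ would force $|\nu|$ to be bounded, so since in three dimensions the smallest Ricci eigenvalue is at least $2\nu$ and the trace of Ricci equals $R$, all three Ricci eigenvalues would be bounded, contradicting $|Ric| \to \infty$. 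Hence $\sup_{M \times [0,T)} R = \infty$.

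Next I would pick $(x^{(i)}, t^{(i)})$ with $Q^{(i)} \triangleq R(x^{(i)}, t^{(i)}) = \max_{M \times [0, t^{(i)}]} R \to \infty$ and rescale $g^{(i)}(t) \triangleq Q^{(i)} g\bigl((Q^{(i)})^{-1}(t-1) + t^{(i)}\bigr)$, exactly as in the proof of Theorem~\ref{theorem: 1}, so that $R^{(i)} \leq 1$ on $B_{g^{(i)}(1)}(x^{(i)}, 1) \times [0,1]$ and $R^{(i)}(x^{(i)}, 1) = 1$. The step that replaces $Ric \geq -A$ is an asymptotic Ricci nonnegativity for the rescaled flow. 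If the rescaled quantity $\nu^{(i)} \triangleq \nu/Q^{(i)}$ were $\leq -\epsilon$ at some point of the region where $R \leq Q^{(i)}$, Hamilton--Ivey would give $\epsilon Q^{(i)}(\log(\epsilon Q^{(i)}) - C) \leq Q^{(i)}$, i.e.\ $\log Q^{(i)} \leq C/\epsilon + O(1)$, which fails for $i$ large. Therefore $\nu^{(i)} \geq -\epsilon^{(i)}$ pointwise with $\epsilon^{(i)} \to 0$, and $Ric^{(i)} \geq 2\nu^{(i)} \geq -2\epsilon^{(i)}$.

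With this approximate Ricci lower bound, the rest of the argument copies the $n=3$ case of Theorem~\ref{theorem: 1} verbatim with $A/Q^{(i)}$ replaced by $2\epsilon^{(i)}$: the combination $Ric^{(i)} \geq -2\epsilon^{(i)}$ and $R^{(i)} \leq 1$ yields $Ric^{(i)} \leq n-1 = 2$ on the local parabolic ball; Theorem~\ref{theorem: noncollapsing2} gives $\kappa$-noncollapsing at $(x^{(i)}, 1)$; Theorem~\ref{thm: sobolev} produces a uniform local Sobolev constant $\sigma$; and Theorem~\ref{thm: scalarestimate} applied to $R^{(i)}$ delivers
\[
\norm{R_{+}^{(i)}}{\infty, {D^{(i)}}'} \leq C(n,\sigma,r)\Bigl(\norm{R^{(i)}}{\frac{5}{2}, D^{(i)}} + 2\epsilon^{(i)}\Bigr),
\]
whose right-hand side tends to zero by scale invariance of the spacetime $L^{5/2}$ norm of $R$ together with $(Q^{(i)})^{-1} \to 0$, contradicting $R_{+}^{(i)}(x^{(i)}, 1) = 1$.

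The main obstacle is the rescaled Hamilton--Ivey step. The pinching estimate is not scale invariant, but the logarithmic factor $\log|\nu|$ grows slowly enough that any putative uniform failure $\nu^{(i)} \leq -\epsilon$ caps $Q^{(i)}$; this quantitative feature is precisely what makes the argument of Theorem~\ref{theorem: 1} go through in dimension three without the a priori hypothesis $Ric \geq -A$.
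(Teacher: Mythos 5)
Your proposal is correct and follows essentially the same route as the paper: the paper's own justification (the remark preceding Corollary~\ref{corollary: 1}) is precisely that the hypothesis $Ric \geq -A$ enters the proof of Theorem~\ref{theorem: 1} only to make Ricci almost nonnegative after rescaling, and that in dimension three Hamilton--Ivey pinching supplies this automatically, after which the blowup, noncollapsing, Sobolev, and Moser iteration steps are repeated verbatim. Your quantitative version of the pinching step (showing $\nu/Q^{(i)} \geq -\epsilon^{(i)}$ with $\epsilon^{(i)} \to 0$ on the region where $R \leq Q^{(i)}$) is a careful spelling-out of what the paper leaves implicit, not a different argument.
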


A natural question is  whether the Ricci lower bound condition
superfluous in higher dimension.  To be conservative, can we
substitute the condition $Ric \geq -A$ by a weaker one?

\vspace{0.5in}

University of Wisconsin Madison,  Department of Mathematics

480 Lincoln Drive, Madison, WI, 53706, U.S.A

\textit{E-mail address:} \textsl{bwang@math.wisc.edu}

\end{document}